\newcolumntype{M}[1]{>{\centering\arraybackslash}m{#1}}
\newdimen\plusheight
\def\+{\;\lower\plusheight\hbox{$+$}\;}
\newdimen\minusheight
\def\-{\;\lower\minusheight\hbox{$-$}\;}
\newdimen\cdotsheight
\def\cds{\lower\cdotsheight\hbox{$\cdots$}}
\newcommand\cplus{\mathbin{\raisebox{-\height}{$+$}}}
\newcommand\contdots{\raisebox{-\height}{$\vphantom{+}\dotsm$}}
\renewcommand{\(}{\left\(}
\renewcommand{\)}{\right\)}
\renewcommand{\[}{\left[}
\numberwithin{equation}{section}
 \theoremstyle{plain}
\newtheorem{theorem}{Theorem}[section]
\newtheorem{lemma}[theorem]{Lemma}
\newtheorem{conjecture}[theorem]{Conjecture}
\newtheorem{corollary}[theorem]{Corollary}
\newtheorem{remark}[theorem]{Remark}
\def\@bignumber#1#2{%
  \ifx#2\end
    #1\let\next\@gobble
  \else
    #1\hspace{0pt plus 1pt}\let\next\@bignumber
  \fi
  \next#2}
\newcommand{\bignumber}[1]{\@bignumber#1\end}
\begin{document}
\allowdisplaybreaks
\title[Results on some partition functions arising from certain relations involving the Rogers-Ramanujan continued fractions] {Results on some partition functions arising from certain relations involving the Rogers-Ramanujan continued fractions}

\author{Nayandeep Deka Baruah}
\address{Department of Mathematical Sciences, Tezpur University, Assam, India, Pin-784028}
\email{nayan@tezu.ernet.in}

\author{Nilufar Mana Begum}
\address{Department of Mathematical Sciences, Tezpur University, Assam, India, Pin-784028}
\email{nilufar@tezu.ernet.in}
\author{Hirakjyoti Das}
\address{Department of Mathematical Sciences, Tezpur University, Assam, India, Pin-784028}
\email{hdas@tezu.ernet.in}


\begin{center}
{\textbf{Results on some partition functions arising from certain relations involving the Rogers-Ramanujan continued fractions}}\\[5mm]
{\footnotesize  Nayandeep Deka Baruah, Nilufar Mana Begum, and Hirakjyoti Das}\\[3mm]
\end{center}


\noindent{\footnotesize{\bf Abstract.}
Relations involving the Rogers-Ramanujan continued fractions $R(q)$, $R(q^3)$, and $R(q^4)$ are used to find new generating functions and congruences  modulo 5 and 25 for $3$-core, 4-core, $4$-regular, and colored partition functions.
\vskip 3mm
\noindent{\footnotesize Key Words:} Generating function, congruence, $t$-core,   $\ell$-regular partition,   colored partition, continued fraction.

\vskip 3mm
\noindent {\footnotesize 2010 Mathematical Reviews Classification
Numbers: Primary 11P83; Secondary 05A15, 05A17}.}

\section{\textbf{Introduction}}
\label{intro}

A partition of a positive integer $n$ is a finite non-increasing sequence $\lambda=(\lambda_1, \lambda_2,\ldots,\lambda_k)$ of positive integers $\lambda_1, \lambda_2,\ldots,\lambda_k$, called \emph{parts} of $\lambda$, such that
$$\sum_{j=1}^k\lambda_j=n.$$

 Let $Q(n)$ denote the number of partitions of $n$ into distinct parts (equivalently, by Euler's theorem, into odd parts). In \cite{bb-distinct} Baruah and Begum found the exact generating functions for $Q(5n+1)$, $Q(25n+1)$, $Q(125n+26)$, and a few congruences modulo 5 and 25. They used  Ramanujan's theta function
identities and some identities for the Rogers-Ramanujan continued fraction. In particular, certain relations between the continued fractions $R(q)$ and $R(q^2)$ were employed to establish their results, where $R(q)=q^{1/5}/\mathcal{R}(q)$ with $\mathcal{R}(q)$ being the famous Rogers-Ramanujan continued fraction  usually given by
\begin{align*}
   \mathcal{R}(q):= \dfrac{q^{1/5}}{1}\cplus \frac{q}{1}\cplus\frac{q^2}{1}\cplus  \frac{q^3}{1}\cplus\contdots=q^{1/5}\dfrac{(q;q^5)_\infty(q^4;q^5)_\infty}
   {(q^2;q^5)_\infty(q^3;q^5)_\infty},
\end{align*}
where for any complex number $a$ and $|q|<1$, the standard $q$-product $(a;q)_\infty$ is defined by
$$(a;q)_\infty:=\prod_{k=0}^{\infty}(1-aq^{k}),$$
and the product representation of $\mathcal{R}(q)$ is due to both Rogers and Ramanujan (See \cite[pp. 158--160]{Spirit}). The technique in \cite{bb-distinct} was further used effectively in \cite{bb-appell, bb-mock, chern, ChernTang}.

In this work, we explore relations involving the Rogers-Ramanujan continued fraction $R(q)$ with those of $R(q^3)$ and $R(q^4)$ to deduce some new generating functions and congruences modulo 5 and 25 for certain partition functions mentioned in the following.

The  Ferrers-Young diagram of a partition $\lambda=(\lambda_1, \lambda_2,\ldots,\lambda_k)$ is an array of left-aligned nodes with $\lambda_i$ nodes in the $i$-th row.  If $\lambda^{\prime}_j$ denotes the number of nodes in column $j$, then the hook number of the node $(i, j)$ is defined by $H(i, j):= \lambda_i + \lambda^{\prime}_j -i-j +1$. A partition of $n$ is called a $t$-core of $n$ if none of the hook numbers is a multiple of $t$. For example, the Ferrers-Young diagram of the partition $\lambda=(5,2,1)$ is given by
$$
\begin{array}{ccccc}
  \bullet & \bullet & \bullet & \bullet & \bullet \\
  \bullet & \bullet &  &  & \\
  \bullet&  &  &  &
\end{array}$$
The nodes $(1,1), (1,2), (1,3), (1,4), (1,5), (2,1), (2,2)$, and $(3,1)$ have hook numbers 7, 5, 3, 2, 1, 3, 1, and $1$, respectively. Since none of these is a multiple of 4, so $\lambda$ is a $4$-core. Obviously, it is a $t$-core for $t\geq8$. Let $a_{t}(n)$ denote the number of partitions of $n$ that are $t$-cores. It is well-known that  the generating function for $a_{t}(n)$ is given by
\begin{align*}
   \sum_{n=0}^{\infty}a_{t}(n)q^{n}=\dfrac{E_t^{t}}{E_1},
\end{align*}
where here and throughout the sequel, for a positive integer $n$,
\begin{align*}
    E_n:=(q^n;q^n)_\infty.
\end{align*}

Next, a partition of a positive integer $n$ is said to be $\ell$-regular if none of its parts is divisible by $\ell$. If $b_{\ell}(n)$ denotes the number of $\ell$-regular partitions of $n$ with $b_{\ell}(0)=1$, then the generating function for $b_\ell(n)$ is given by
$$\sum_{n\geq0}b_{\ell}(n)q^n=\dfrac{E_{\ell}}{E_1}.$$

Finally, for an integer $r>1$, let $p_{[1^kr^k]}(n)$ denote the number of partitions of $n$ in which multiples of $r$ appear in $k+1$ colors and the rest of the parts in $k$ colors. The generating function for $p_{[1^kr^k]}(n)$ is given by
\begin{align*}
    \sum_{n=0}^{\infty}p_{[1^kr^k]}(n)q^n&=\dfrac{1}{E_1^kE_r^k}.
\end{align*}

We are now in a position to state our results. The results  from Theorem \ref{Theo1}  through Theorem \ref{TheoCongruence} are obtained by using identities involving $R(q)$ and $R(q^3)$.

\begin{theorem} \label{Theo1} For any integer $n\geq0$, we have
\begin{align}
   \label{3coregen} \sum_{n=0}^{\infty}a_{3}(5n+3)q^{n}&=q\dfrac{E_{15}^3}{E_5}.
\end{align}
\end{theorem}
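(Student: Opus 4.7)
My plan is to 5-dissect the generating function $\sum_n a_3(n)q^n = E_3^3/E_1$ and read off the residue-3 block, using Ramanujan's identity $E_1/E_{25} = R(q^5) - q - q^2/R(q^5)$ together with its $q\mapsto q^3$ companion $E_3/E_{75} = R(q^{15}) - q^3 - q^6/R(q^{15})$.

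I would first cube the second identity; a short calculation in which several cross-terms cancel pairwise gives
\begin{equation*}
\frac{E_3^3}{E_{75}^3} = R(q^{15})^3 - 3 q^3 R(q^{15})^2 + 5 q^9 - \frac{3 q^{15}}{R(q^{15})^2} - \frac{q^{18}}{R(q^{15})^3},
\end{equation*}
so that only five of the ten possible monomials survive and their $q$-exponents lie in the residue classes $\{0,3,4\}$ modulo $5$. Next, by multiplying $E_{25}/E_1$ by the Galois average $\prod_{k=1}^{4}(R(q^5) - \zeta^k q - \zeta^{2k}q^2/R(q^5))$ with $\zeta=e^{2\pi i/5}$, the denominator becomes $E_5^6/E_{25}^6$ and the numerator expands as a Laurent polynomial in $R(q^5)$ with the Fibonacci-type coefficient sequence $1,1,2,3,5,-3,2,-1,1$, yielding the full 5-dissection of $1/E_1$.

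Multiplying the two expansions and collecting the terms whose $q$-exponent is $\equiv 3 \pmod{5}$ is mechanical, because $R(q^5)$ and $R(q^{15})$ are power series in $q^5$. The bookkeeping records a nine-term sum of monomials $q^a R(q^5)^b R(q^{15})^c$ arising from the three compatible pairings of residues, namely $(0,3)$, $(3,0)$, and $(4,4)$. The target value of this sum is $q^8 E_5^6/E_{25}^6 = q^8(R(q^5)^5 - 11 q^5 - q^{10}/R(q^5)^5)$, the last equality being Ramanujan's companion identity $R(q)^5 - 11 q - q^2/R(q)^5 = E_1^6/E_5^6$ specialized at $q \mapsto q^5$.

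The crux of the argument, and the principal obstacle, is the elimination of $R(q^{15})$ from the nine-term sum via a modular equation of degree $3$ connecting $R(q^5)$ with $R(q^{15})$---exactly the kind of relation between $R(q)$ and $R(q^3)$ advertised in the paper's title. Once this algebraic cancellation is carried out, one arrives at $\sum_n a_3(5n+3) q^{5n+3} = q^8 E_{75}^3/E_{25}$; dividing by $q^3$ and substituting $q^5 \mapsto q$ then produces the claim.
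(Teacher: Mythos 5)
Your setup is sound and is in fact the same route the paper takes: write $E_3=E_{75}\bigl(R(q^{15})-q^3-q^6/R(q^{15})\bigr)$, cube it (your five-term expansion and the residue classes $\{0,3,4\}$ are correct), multiply by the 5-dissection \eqref{1byE_1} of $1/E_1$, and extract the class $3$; the nine-term bookkeeping and the identification of the target as $q^8E_5^6/E_{25}^6=q^8\bigl(R(q^5)^5-11q^5-q^{10}/R(q^5)^5\bigr)$ are also correct. But the argument stops exactly where the real work begins. The step you yourself call ``the principal obstacle'' --- eliminating $R(q^{15})$ from the nine-term sum ``via a modular equation of degree $3$'' --- \emph{is} the content of the theorem, and you neither name the relation(s) you would use nor show that the required cancellation actually takes place. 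As written, the conclusion $\sum_n a_3(5n+3)q^{5n+3}=q^8E_{75}^3/E_{25}$ is asserted, not derived, so the proof is incomplete at its crux.

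For comparison, the paper closes this gap with the specific degree-$3$ relations of Lemma \ref{LemmaC}, namely \eqref{C_1}--\eqref{C_3}, used jointly with Ramanujan's identity \eqref{A}. After rescaling $q^5\mapsto q$, the nine terms are grouped into the symmetric combinations $C_2=R(q)R(q^3)^3+q^4/\bigl(R(q)R(q^3)^3\bigr)$, $C_4=R(q^3)^3/R(q)^4-q^2R(q)^4/R(q^3)^3=A-C_3(C_1+1)$ as in \eqref{C_4}, and $R(q)^4R(q^3)^2+q^4/\bigl(R(q)^4R(q^3)^2\bigr)-3q\bigl(R(q^3)^2/R(q)-q^2R(q)/R(q^3)^2\bigr)$, each of which is an eta-quotient; substituting these, the terms in $E_3E_5^5/(E_1E_{15}^5)$ and $E_1E_{15}^5/(E_3E_5^5)$ cancel and only $qE_1^6/E_5^6$ survives, which is precisely your target. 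Note that no single modular equation is invoked: it is this particular trio of relations, together with \eqref{A} and the grouping of the terms, that makes the elimination succeed. To complete your proposal you would need to supply these identities (or equivalent ones) and carry out that cancellation explicitly.
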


With the aid of \eqref{3coregen}, we find the following corollary.
\begin{corollary}\label{coro1}For any integer $n\geq0$, $k>0$, and $r=0,2,3\;and\; 4$, we have
\begin{align}
   \label{3coreR1} a_3(25n+5r+3)&=0\\ \intertext{and}
    \label{3coreR2} a_3\bigg(5^{2k}n+\dfrac{5^{2k}-1}{3}\bigg)&=a_3(n).
\end{align}
\end{corollary}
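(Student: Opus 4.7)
The plan is to read everything off the identity \eqref{3coregen} by a $5$-dissection argument, then iterate. First I would observe that the right-hand side of \eqref{3coregen} is
\[
q\,\frac{E_{15}^3}{E_5}=q\,\frac{(q^{15};q^{15})_\infty^3}{(q^5;q^5)_\infty},
\]
which, when expanded as a power series in $q$, involves only exponents congruent to $1$ modulo $5$, since every factor in the numerator and denominator contributes only multiples of $5$ in the exponent. Hence the coefficient of $q^n$ on the right of \eqref{3coregen} vanishes unless $n\equiv 1\pmod 5$, which translates immediately into
\[
a_3(5(5m+r)+3)=0\qquad\text{for } r\in\{0,2,3,4\},
\]
and this is precisely \eqref{3coreR1} after rewriting $5(5m+r)+3=25m+5r+3$.

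Next I would extract the surviving residue class $n\equiv 1\pmod 5$. Writing $n=5m+1$ in \eqref{3coregen} and substituting $q^5\mapsto q$ in the series $E_{15}^3/E_5$ (legal because only powers of $q^5$ appear) converts it into $E_3^3/E_1$, which is exactly the generating function for $a_3$. Therefore
\begin{equation*}
\sum_{m=0}^{\infty}a_3(25m+8)\,q^m=\frac{E_3^3}{E_1}=\sum_{m=0}^{\infty}a_3(m)\,q^m,
\end{equation*}
yielding the base case
\[
a_3(25n+8)=a_3(n),
\]
which is \eqref{3coreR2} for $k=1$ since $(5^2-1)/3=8$.

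Finally, I would induct on $k$. Assuming $a_3(5^{2k}n+(5^{2k}-1)/3)=a_3(n)$, I replace $n$ by $25n+8$ and apply the base case to get
\begin{align*}
a_3\!\left(5^{2k}(25n+8)+\tfrac{5^{2k}-1}{3}\right)
&=a_3(25n+8)=a_3(n),
\end{align*}
and the left-hand argument simplifies to
\[
5^{2k+2}n+\frac{24\cdot 5^{2k}+5^{2k}-1}{3}=5^{2(k+1)}n+\frac{5^{2(k+1)}-1}{3},
\]
which closes the induction. There is no genuine obstacle here beyond the bookkeeping; the only thing one must be careful about is the algebraic simplification of the constant term in the inductive step, where combining $8\cdot 5^{2k}$ with $(5^{2k}-1)/3$ must collapse exactly to $(5^{2(k+1)}-1)/3$, which it does because $24\cdot 5^{2k}+5^{2k}=25\cdot 5^{2k}=5^{2(k+1)}$.
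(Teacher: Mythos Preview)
Your proof is correct and follows essentially the same approach as the paper: both recognize that $q\,E_{15}^3/E_5=\sum_{m\ge0}a_3(m)q^{5m+1}$, read off \eqref{3coreR1} from the vanishing residue classes, obtain the base case $a_3(25n+8)=a_3(n)$ from the $q^{5n+1}$ coefficients, and then induct. The only difference is presentational---the paper writes the key observation directly as $\sum a_3(5n+3)q^n=\sum a_3(n)q^{5n+1}$, whereas you unpack why the exponents on the right lie in a single residue class---but the argument is the same.
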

\noindent We note that  \eqref{3coreR2} is only a special case of a more general result by Hirschhorn and Sellers \cite[Corollary 8]{Hirsch.Sell}.

\begin{theorem}\label{Theo2}For any integer $n\geq0$, we have
\begin{align}
\label{colorgen1}  \sum_{n=0}^{\infty}p_{[1^13^1]}(5n+1)q^n&=\dfrac{E_5^5}{E_1^6E_{15}}+10q\dfrac{E_5^{10}}{E_1^7E_3^5}
  +q^2\dfrac{E_{15}^5}{E_3^6E_5}+45q^3\dfrac{E_5^5E_{15}^5}{E_1^6E_3^6}-90q^5\dfrac{E_{15}^{10}}{E_1^5E_3^7},
\end{align}

\begin{align}
\label{colorgen2}  \sum_{n=0}^{\infty}p_{[1^23^2]}(5n+2)q^n&=5\bigg(\dfrac{E_{5}^{10}}{E_{1}^{12}E_{15}^{2}}+20q\dfrac{E_{5}^{15}}{E_{1}^{13}E_{3}^{5}E_{15}^{}}+q^2\bigg(80\dfrac{E_{5}^{20}}{E_{1}^{14}E_{3}^{10}}+12\dfrac{E_{5}^{4}E_{15}^{4}}{E_{1}^{6}E_{3}^{6}}\bigg)\nonumber \\
  &\quad+ q^3\bigg(306\dfrac{E_{5}^{10}E_{15}^{4}}{E_{1}^{12}E_{3}^{6}}-36\dfrac{E_{5}^{9}E_{15}^{5}}{E_{1}^{7}E_{3}^{11}}\bigg)+q^4\bigg(\dfrac{E_{15}^{10}}{E_{1}^{12}E_{3}^{2}}+540\dfrac{E_{5}^{15}E_{15}^{5}}{E_{1}^{13}E_{3}^{11}}\bigg)\nonumber \\
  &\quad+q^5\bigg(306\dfrac{E_{5}^{4}E_{15}^{10}}{E_{1}^{6}E_{3}^{12}}+324\dfrac{E_{5}^{5}E_{15}^{9}}{E_{1}^{11}E_{3}^{7}}\bigg)+4745q^6\dfrac{E_{5}^{10}E_{15}^{10}}{E_{1}^{12}E_{3}^{12}}\nonumber \\
  &\quad-180q^7\dfrac{E_{15}^{15}}{E_{1}^{5}E_{3}^{13}E_{5}^{}}-4860q^8\dfrac{E_{5}^{5}E_{15}^{5}}{E_{1}^{11}E_{3}^{13}}+6480q^{10}\dfrac{E_{15}^{20}}{E_{1}^{10}E_{3}^{14}}\bigg)
\end{align}
and
\begin{align}
 \label{colorgen3} &\sum_{n=0}^{\infty}p_{[1^33^3]}(5n+3)q^n\nonumber\\
 &=
  25\dfrac{ E_5^{15}}{E_1^{18} E_{15}^3}
 +q \bigg(216\dfrac{ E_5^{35}}{E_1^{22} E_3^{14} E_{15}^5}+234\dfrac{ E_5^{20}}{E_1^{19} E_3^5 E_{15}^2}+234\dfrac{ E_5^{19}}{E_1^{14} E_3^{10} E_{15}}+81\dfrac{ E_{15}^3 E_5^3}{E_1^6 E_3^6}\bigg)
  \nonumber \\
  &\quad+q^2\bigg(2976\dfrac{ E_5^{25}}{E_1^{20} E_3^{10} E_{15}}+2049\dfrac{ E_5^{24}}{E_1^{15} E_3^{15}}+27\dfrac{ E_{15}^2 E_5^{10}}{E_1^{17} E_3}+3528\dfrac{ E_{15}^3 E_5^9}{E_1^{12} E_3^6}\bigg)\nonumber \\
  &\quad+q^3 \bigg(13224\dfrac{ E_5^{30}}{E_1^{21} E_3^{15}}+38858\dfrac{ E_{15}^3 E_5^{15}}{E_1^{18} E_3^6}+6075\dfrac{ E_{15}^4 E_5^{14}}{E_1^{13} E_3^{11}}+134\dfrac{ E_{15}^5 E_5^{13}}{E_1^8 E_3^{16}}\bigg)
  \nonumber \\
  &\quad+q^4 \bigg(109101\dfrac{ E_{15}^4 E_5^{20}}{E_1^{19} E_3^{11}}+22226\dfrac{ E_{15}^5 E_5^{19}}{E_1^{14} E_3^{16}}+3528\dfrac{ E_{15}^9 E_5^3}{E_1^6 E_3^{12}}-3\dfrac{ E_{15}^{10} E_5^2}{E_1 E_3^{17}}\bigg)\nonumber \\
  &\quad+q^5 \bigg(270086\dfrac{ E_{15}^5 E_5^{25}}{E_1^{20} E_3^{16}}+18018 \dfrac{E_{15}^8 E_5^{10}}{E_1^{17} E_3^7}+138555\dfrac{ E_{15}^9 E_5^9}{E_1^{12} E_3^{12}}-2002\dfrac{ E_{15}^{10} E_5^8}{E_1^7 E_3^{17}}\bigg)\nonumber \\
  &\quad+q^6 \bigg(1270766\dfrac{ E_{15}^9 E_5^{15}}{E_1^{18} E_3^{12}}-55171\dfrac{ E_{15}^{10} E_5^{14}}{E_1^{13} E_3^{17}}+25\dfrac{ E_{15}^{15}}{E_3^{18} E_5^3}\bigg)\nonumber \\
  &\quad +q^7 \bigg(441524\dfrac{ E_{15}^{10} E_5^{20}}{E_1^{19} E_3^{17}}+10854\dfrac{ E_{15}^{13} E_5^5}{E_1^{16} E_3^8}-54675\dfrac{ E_{15}^{14} E_5^4}{E_1^{11} E_3^{13}}+38858\dfrac{ E_{15}^{15} E_5^3}{E_1^6 E_3^{18}}\bigg)\nonumber \\
  &\quad+q^8 \bigg(1270766\dfrac{ E_5^9 E_{15}^{15}}{E_1^{12} E_3^{18}}+496539 \dfrac{E_5^{10} E_{15}^{14}}{E_1^{17} E_3^{13}}\bigg)\nonumber \\
  &\quad+q^9 \bigg(12104417\dfrac{ E_5^{15} E_{15}^{15}}{E_1^{18} E_3^{18}}-2106\dfrac{ E_{15}^{20}}{E_1^5 E_3^{19} E_5^2}+18954\dfrac{ E_{15}^{19}}{E_1^{10} E_3^{14} E_5}\bigg)\nonumber \\
  &\quad+q^{10} \bigg(1800306\dfrac{ E_5^5 E_{15}^{19}}{E_1^{16} E_3^{14}}-981909\dfrac{ E_5^4 E_{15}^{20}}{E_1^{11} E_3^{19}}\bigg)-3973716q^{11}\dfrac{  E_5^{10} E_{15}^{20}}{E_1^{17} E_3^{19}}\nonumber \\
  &\quad+q^{12} \bigg(241056\dfrac{ E_{15}^{25}}{E_1^{10} E_3^{20} E_5}-1493721\dfrac{ E_{15}^{24}}{E_1^{15} E_3^{15}}\bigg)+21876966q^{13}\dfrac{  E_5^5 E_{15}^{25}}{E_1^{16} E_3^{20}}\nonumber \\
  &\quad-9640296 q^{15}\dfrac{ E_{15}^{30}}{E_1^{15} E_3^{21}}+1417176 q^{17}\dfrac{ E_{15}^{35}}{E_1^{14} E_3^{22} E_5^5}.
\end{align}
\end{theorem}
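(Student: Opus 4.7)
The plan is to dissect $1/(E_1^k E_3^k)$ modulo $5$ in the style of \cite{bb-distinct}, with the pair $(R(q),R(q^3))$ playing the role that $(R(q),R(q^2))$ played there. The starting point is Ramanujan's $5$-dissection of the Euler product,
\begin{align*}
E_1 &= E_{25}\bigl(R(q^5)^{-1} - q - q^2\, R(q^5)\bigr),\\
E_3 &= E_{75}\bigl(R(q^{15})^{-1} - q^3 - q^6\, R(q^{15})\bigr),
\end{align*}
which presents $E_1E_3$ as $E_{25}E_{75}\,U$, where $U$ is a polynomial of bidegree $(2,2)$ in $u:=R(q^5)$ and $v:=R(q^{15})$ whose nine monomials distribute transparently among the five residue classes modulo~$5$.

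To extract the residue class $5n+k$, I would invert $U^k$ by multiplying numerator and denominator by a suitable product of conjugates of $U$ (in the spirit of Galois conjugation under $q\mapsto\zeta_5 q$), thereby pushing the denominator into a pure function of $q^5$; the numerator can then be dissected term by term. The crucial algebraic input here is the polynomial relation between $R(q)$ and $R(q^3)$ that the paper establishes in its preliminary section; after the substitution $q\mapsto q^5$ it becomes an algebraic relation between $u$ and $v$ that both keeps intermediate expressions within a bounded degree and supplies the cancellations that collapse cross-terms, especially when $k=3$.

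Once the coefficient of $q^{5n+k}$ is isolated and $q^5$ is renamed to $q$, what remains is a rational combination of $R(q),R(q^3),E_5,E_{15},E_{25},E_{75}$. To recast this into the stated form involving only $E_1,E_3,E_5,E_{15}$, I would invoke the classical identities
\begin{align*}
R(q)^{-5} - 11 - R(q)^5 &= \dfrac{E_1^6}{q\,E_5^6},\\
R(q^3)^{-5} - 11 - R(q^3)^5 &= \dfrac{E_3^6}{q^3\,E_{15}^6},
\end{align*}
which, together with the $R(q)$--$R(q^3)$ relation, eliminate all occurrences of the continued fractions and absorb the powers of $E_{25}$ and $E_{75}$ into eta-quotients in $E_1,E_3,E_5,E_{15}$.

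The main obstacle is computational rather than conceptual. For $k=1$ the algebra is straightforward, and for $k=2$ it is manageable; for $k=3$, however, cubing a nine-term polynomial $U$ and then inverting it produces many hundreds of intermediate monomials, and the final answer in \eqref{colorgen3} contains more than thirty eta-quotient summands with integer coefficients as large as $21{,}876{,}966$. A computer algebra system such as \emph{Mathematica} is essentially mandatory to carry through the bookkeeping without error. As a cross-check, every eta-quotient that appears is a modular form on a congruence subgroup of known level, so equality of the two sides of each claimed identity can be certified by matching sufficiently many initial Fourier coefficients.
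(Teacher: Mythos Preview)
Your proposal is correct and follows essentially the same route as the paper: dissect $1/(E_1E_3)^k$ via the Rogers--Ramanujan $5$-dissection, extract the relevant residue class, and then eliminate the continued-fraction terms using the relations between $R(q)$, $R(q^3)$ and eta-quotients recorded in Lemmas~\ref{LemmaA} and~\ref{LemmaC}. The only difference is cosmetic: the paper cites the ready-made $5$-dissection of $1/E_1$ in~\eqref{1byE_1} rather than rationalising $U^{-k}$ by Galois conjugates, and it writes out explicitly the auxiliary combinations $C_4,\dots,C_{10}$ that are needed to finish the simplification.
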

We derive the following congruences from \eqref{colorgen1}.
\begin{corollary}\label{coro2}For any integer $n\geq0$, we have
\begin{align}
  \label{colgen1R1}  p_{[1^13^1]}(25n+21)&\equiv 0\; (\textup{mod}\: 5)\\ \intertext{and}
    \label{colgen1R2}  p_{[1^13^1]}(625n+521)&\equiv 0\; (\textup{mod}\: 5^2).
\end{align}
\end{corollary}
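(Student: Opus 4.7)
The plan is to derive both congruences directly from the generating function identity \eqref{colorgen1}.

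For \eqref{colgen1R1}, I reduce \eqref{colorgen1} modulo $5$. The three terms with numerical coefficients $10$, $45$, and $-90$ vanish, and applying the congruences $E_5 \equiv E_1^5$ and $E_{15} \equiv E_3^5 \pmod{5}$ (both consequences of $(1-q^n)^5 \equiv 1-q^{5n} \pmod{5}$) to the two remaining terms yields
\[
\sum_{n\ge 0} p_{[1^13^1]}(5n+1) q^n \equiv \dfrac{E_5^4}{E_1 E_{15}} + q^2 \dfrac{E_{15}^4}{E_3 E_5} \pmod{5}.
\]
Since $E_5^4/E_{15}$ and $E_{15}^4/E_5$ are both power series in $q^5$, extracting $[q^{5n+4}]$ from the right-hand side reduces, via convolution, to extracting $[q^{5n+4}]$ from $1/E_1$ and $[q^{5m+2}]$ from $1/E_3$ at appropriate indices. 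The former equals $p(5n+4)$, divisible by $5$ by Ramanujan's classical partition congruence. For the latter, since $1/E_3 = \sum_j p(j) q^{3j}$, the coefficient $[q^{5m+2}]\,1/E_3$ is zero when $5m+2 \not\equiv 0 \pmod{3}$ and equals $p(5t+4)$ when $m = 3t+2$; in either case it is divisible by $5$. This establishes \eqref{colgen1R1}.

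For \eqref{colgen1R2}, the strategy is to iterate, upgrading the analysis to modulo $25$. The goal is to derive an explicit identity of the form
\[
\sum_{n \ge 0} p_{[1^13^1]}(25n+21) q^n \equiv 5\, F(q) \pmod{25},
\]
where $F(q)$ is an explicit sum of eta quotients, by extracting $[q^{5n+4}]$ from \eqref{colorgen1} using Ramanujan's exact dissection identity $\sum_n p(5n+4) q^n = 5 E_5^5/E_1^6$ (and its counterpart for $1/E_3$ obtained via $q \mapsto q^3$) in place of the mere congruence, so that the factor of $5$ is made explicit. The three terms of \eqref{colorgen1} with numerical coefficients $10$, $45$, and $-90$ already carry a factor of $5$ and contribute to $F(q)$ by direct $q^{5n+4}$-extraction modulo $25$. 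Once $F(q)$ has been produced, applying the same mod-$5$ extraction argument as in the proof of \eqref{colgen1R1} to extract $[q^{25n+20}]$ from $F(q)$ produces a second factor of $5$ through another application of Ramanujan's congruence, yielding $p_{[1^13^1]}(625n+521) \equiv 0 \pmod{25}$.

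The principal obstacle is the explicit computation of $F(q)$: the required $5$-dissections of $1/E_1^k$ and $1/E_3^k$ for the various powers $k$ appearing in \eqref{colorgen1} produce lengthy expressions involving $R(q^5)$ and $R(q^{15})$, and tracking the interaction between the $q^3$-structure of $1/E_3$ and the $q^5$-dissection introduces bookkeeping of residues modulo $15$. The relations between $R(q)$ and $R(q^3)$ developed elsewhere in the paper are the natural tool for organizing these expressions.
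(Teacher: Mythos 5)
Your proof of \eqref{colgen1R1} is correct and is essentially the paper's argument: reduce \eqref{colorgen1} modulo $5$ via $E_k^5\equiv E_{5k}$, observe that $E_5^4/E_{15}$ and $E_{15}^4/E_5$ are series in $q^5$, and note that the resulting extractions $[q^{5m+4}]\{1/E_1\}$ and $[q^{5m+2}]\{1/E_3\}$ are multiples of $5$ (the paper gets this from the $5q^4$ term of the $5$-dissection \eqref{1byE_1}, you get it from $p(5n+4)\equiv0\pmod5$; same mechanism).

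For \eqref{colgen1R2}, however, there is a genuine gap. Your skeleton (reduce \eqref{colorgen1} mod $25$, extract $[q^{5n+4}]$ with the factor of $5$ made explicit to get $\sum p_{[1^13^1]}(25n+21)q^n\equiv 5F(q)\pmod{25}$, then show $[q^{25n+20}]F\equiv0\pmod5$) matches the paper's architecture, but the decisive step is asserted rather than proved. You never compute $F$, and you claim the second factor of $5$ comes from ``another application of Ramanujan's congruence'' exactly as in the mod-$5$ case. That does not follow: when the paper carries out the mod-$25$ extraction (its identities \eqref{Doneinseparate1}--\eqref{Doneinseparate5}) it finds $\sum p_{[1^13^1]}(25n+21)q^n\equiv 10\big(E_1^3E_5^4/E_3+q^2E_3^3E_{15}^4/E_1\big)\pmod{25}$, i.e.\ $F$ contains \emph{positive} powers of $E_1$ and $E_3$ mixed with $1/E_3$ and $1/E_1$, so extracting $[q^{25n+20}]$ from $F$ is not a convolution against a single $1/E_1$ and cannot be dispatched by $p(5n+4)\equiv0$. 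What the paper actually does is a further $5$-dissection stage: it applies \eqref{E_1} (the dissection of $E_1$ itself, needed for $E_1^3$ and $E_3^3$), extracts $[q^{5n}]$, and then must invoke the $R(q)$--$R(q^3)$ relations \eqref{C_1}--\eqref{C_3}, \eqref{C_4}, \eqref{C_5}, \eqref{C_6} to collapse the resulting $R(q),R(q^3)$ combinations; the crucial outcome is a self-similarity, namely $\sum p_{[1^13^1]}(125n+21)q^n\equiv 10\big(E_5^4/(E_1E_{15})+q^2E_{15}^4/(E_3E_5)\big)\pmod{25}$, which reproduces (up to the factor $10$) the same expression that appeared in the mod-$5$ proof; only then does the final $[q^{5n+4}]$ extraction supply the extra factor of $5$. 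Your proposal neither produces $F$ nor supplies this intermediate reduction, so the congruence modulo $5^2$ is not established as written; to complete it you would need to carry out the dissections of $1/E_1^6$, $1/E_1^7$, $1/E_1^2$ (and their $q\mapsto q^3$ analogues) modulo $25$ and then perform the additional dissection-and-simplification step just described.
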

\noindent Congruence \eqref{colgen1R1} was  found earlier by Ahmed, Baruah, and Dastidar \cite{AhmedBaruahDastidar} whereas \eqref{colgen1R2} seems to be new. Computational evidences indicate that there might exist congruences modulo higher powers of 5 similar to \eqref{colgen1R1} and \eqref{colgen1R2}.  To that end, we pose the following conjecture.
\begin{conjecture} For any integer $n\geq0$ and $k>0$, we have
\begin{align*}
p_{[1^13^1]}\bigg(5^{2k}n+\dfrac{5^{2k+1}+1}{6}\bigg)&\equiv 0\;(\textup{mod}\:5^k).
\end{align*}
\end{conjecture}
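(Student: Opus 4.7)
The natural strategy is induction on $k$. The base cases $k=1$ and $k=2$ are precisely \eqref{colgen1R1} and \eqref{colgen1R2} of Corollary~\ref{coro2}. Setting
$$L_k(q):=\sum_{n\geq 0} p_{[1^13^1]}\!\left(5^{2k}n+\tfrac{5^{2k+1}+1}{6}\right)q^n,$$
the induction step should deduce $L_{k+1}\equiv 0\pmod{5^{k+1}}$ from $L_k\equiv 0\pmod{5^k}$. A short computation shows that, writing $N=5^{2k}n+(5^{2k+1}+1)/6$, the relation $N=5^{2k+2}m+(5^{2k+3}+1)/6$ forces $n=25m+20$; thus the task reduces to showing that the coefficients of $q^{25m+20}$ in $L_k$ carry one extra factor of $5$.

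The machinery I would invoke is the Watson--Hirschhorn--Hunt framework: produce a finite-rank $\mathbb{Z}$-module $\mathcal{M}$ of eta-quotients on $\Gamma_0(15)$, containing a prescribed normalization of the generating function \eqref{colorgen1}, that is stable under the Atkin-type operator
$$U_5^{(20)}\!\left(\sum c_n q^n\right):=\sum c_{25n+20}\,q^n.$$
The desired divisibility would then follow by iterating $U_5^{(20)}$ and tracking $5$-adic valuations in the matrix representation of this operator with respect to a fixed basis of $\mathcal{M}$. The Rogers--Ramanujan ingredients at hand---in particular the $5$-dissection of $E_1/E_5$ arising from $R(q)$, and its $R(q^3)$-analogue giving the $5$-dissection of $E_3/E_{15}$, both of which already drive Theorem~\ref{Theo2}---are precisely the tools needed to convert products of $E_1,E_3,E_5,E_{15}$ back into eta-quotient combinations of the desired form, so that each $U_5^{(20)}$-image can in principle be re-expressed within $\mathcal{M}$.

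The principal obstacle is twofold. First, one must identify the correct finite basis of $\mathcal{M}$: the five eta-quotients appearing in \eqref{colorgen1} suggest natural candidates, but iteration of $U_5^{(20)}$ generically enlarges the basis, and one must confirm that a closed finite collection exists. Second, even after fixing the basis, one must explicitly compute the matrix of $U_5^{(20)}$ on $\mathcal{M}$ and verify the requisite lower bound on the $5$-adic valuations of its entries---concretely, that on the relevant subspace the operator is divisible by $5$. This numerical step is the heart of all Watson-type proofs and is what ultimately distinguishes a proven congruence family from a mere conjecture; the fact that the authors state this only as a conjecture suggests that neither the closure of $\mathcal{M}$ nor the sharp valuation bound is yet established for $p_{[1^13^1]}$. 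Should the matrix verification succeed, the induction closes in the standard way and the conjecture follows.
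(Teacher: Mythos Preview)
The paper does not prove this statement: it is explicitly labeled a \emph{Conjecture}, and the authors only establish the two special cases $k=1$ and $k=2$ (these are precisely \eqref{colgen1R1} and \eqref{colgen1R2} of Corollary~\ref{coro2}). There is therefore no ``paper's own proof'' against which to compare.

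Your proposal is not a proof either, and you acknowledge this. What you have written is a correct identification of the base cases together with a plausible high-level outline of a Watson--Atkin--Hirschhorn--Hunt style argument, but the two decisive ingredients---the existence of a finite $U$-stable module of $\Gamma_0(15)$ eta-quotients containing (a normalization of) the generating function in \eqref{colorgen1}, and the explicit verification that the matrix of the iterated $U$-operator on that module gains one factor of $5$ per step---are stated as obstacles rather than carried out. Until those computations are actually performed and checked, the statement remains open, exactly as the paper leaves it.
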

The above infinite family of congruences is analogous to the one  for $p_{[1^12^1]}$ discovered independently  by Chan and Toh \cite{ChanandToh} and Xiong \cite{Xiong}.

We derive the next corollary from \eqref{colorgen2}.
\begin{corollary}\label{coro3}For any integer $n\geq0$ and $k>0$, we have
\begin{align}
   \label{Colgen2R1} p_{[1^23^2]}(5n+2)&\equiv 0\; (\textup{mod}\: 5)\\\intertext{and}
    \label{Colgen2R2} p_{[1^23^2]}\bigg(5^{2k}n+\dfrac{2\times5^{2k}+1}{3}\bigg) &\equiv p_{[1^23^2]}(25n+17)\; (\textup{mod}\: 5^2).
\end{align}
\end{corollary}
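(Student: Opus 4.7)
The first congruence \eqref{Colgen2R1} is immediate from \eqref{colorgen2}, since the right-hand side carries a global factor of $5$.

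For \eqref{Colgen2R2} the plan is to reduce the infinite family to a single base congruence and then establish that base case by a direct $5$-dissection of \eqref{colorgen2} modulo $25$. Writing $a_{k}:=(2\cdot 5^{2k}+1)/3$ and $c_{k}:=2(5^{2k-2}-1)/3$, one checks $a_{k}=25c_{k}+17$ and $c_{k+1}=25c_{k}+16$, so that
\begin{equation*}
5^{2k}n+a_{k}=25\bigl(5^{2k-2}n+c_{k}\bigr)+17 \quad\text{and}\quad 5^{2k}n+c_{k+1}=25\bigl(5^{2k-2}n+c_{k}\bigr)+16.
\end{equation*}
Consequently, \eqref{Colgen2R2} is the statement $p_{[1^23^2]}\bigl(25(5^{2k-2}n+c_{k})+17\bigr)\equiv p_{[1^23^2]}(25n+17)\pmod{25}$, which by induction on $k$ reduces to the single base congruence
\begin{equation*}
p_{[1^23^2]}(625n+417)\equiv p_{[1^23^2]}(25n+17)\pmod{25}
\end{equation*}
together with the trivial $k=1$ identity.

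To establish the base congruence, I would divide \eqref{colorgen2} by $5$ and read off $\tfrac{1}{5}\sum_{n\geq 0} p_{[1^23^2]}(5n+2)\,q^{n}$ as a $\mathbb{Z}$-linear combination of eta-quotients. Reducing modulo $5$, several of the outer coefficients (for instance $5$, $20$, $80$, $540$, $4745$, $180$, $4860$, and $6480$) vanish, leaving a considerably shorter expression. Then, mirroring the strategy used to prove Theorem~\ref{Theo2}, I would apply the $5$-dissection identities for $R(q)$ and $R(q^{3})$ to each surviving eta-quotient and extract the coefficient of $q^{5n+3}$, thereby obtaining a closed eta-quotient formula for $\sum_{n\geq 0} p_{[1^23^2]}(25n+17)\,q^{n}$ modulo $25$.

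A second application of the same $R(q)$--$R(q^{3})$ dissection toolkit, this time extracting the coefficient of $q^{25n+16}$ from the resulting series (which corresponds to $p_{[1^23^2]}(625n+417)$), should reproduce the same eta-quotient expression modulo $25$, thereby proving the base congruence. The structural reason to expect this is that the $R(q)$--$R(q^{3})$ dissection machinery lives inside the same family of eta-quotients in $E_{1},E_{3},E_{5},E_{15}$ that already appears in \eqref{colorgen2}, so iterated dissection remains within the same ring and is compatible with the $E_{5}$ and $E_{15}$ factors already present. The main obstacle will be the algebraic bookkeeping: despite many vanishing coefficients modulo $5$, each surviving eta-quotient contributes to several residue classes under each dissection, and one must track every piece and verify the cancellations modulo $5$ that yield the base congruence; given the $R(q)$--$R(q^{3})$ identities developed earlier in the paper, this is a standard if lengthy calculation.
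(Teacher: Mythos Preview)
Your approach is essentially the same as the paper's. Both observe that \eqref{Colgen2R1} is immediate from the global factor $5$ in \eqref{colorgen2}, reduce \eqref{Colgen2R2} by induction to the single base congruence $p_{[1^23^2]}(625n+417)\equiv p_{[1^23^2]}(25n+17)\pmod{25}$, and then establish that base case by reducing \eqref{colorgen2} modulo $25$ and iterating the $R(q)$--$R(q^3)$ five-dissection machinery. The paper carries the computation out explicitly: after reducing \eqref{colorgen2} via $E_k^5\equiv E_{5k}\pmod 5$ it obtains the three-term expression
\[
\sum_{n\ge0}p_{[1^23^2]}(5n+2)q^n\equiv 5\dfrac{E_5^8}{E_1^2E_{15}^2}+10q^2\dfrac{E_5^{3}E_{15}^3}{E_1E_{3}}+5q^4\dfrac{E_{15}^8}{E_3^2E_5^2}\pmod{25},
\]
then extracts $[q^{5n+3}]$ to reach the closed form
\[
\sum_{n\ge0}p_{[1^23^2]}(25n+17)q^n\equiv 10\Big(E_1^2E_3^3\dfrac{E_5^4}{E_{15}}+q^2E_1^3E_3^2\dfrac{E_{15}^4}{E_5}\Big)\pmod{25},
\]
and finally passes through the intermediate series for $p_{[1^23^2]}(125n+42)$ before recovering the identical expression for $p_{[1^23^2]}(625n+417)$; your ``extract $q^{25n+16}$'' step is precisely this two-stage dissection, and the bookkeeping you anticipate is exactly what the paper records.
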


\begin{remark}
It is proved in Section \ref{TheoAndCoro} that
\begin{align}
    \label{Colgen3R1}p_{[1^33^3]}(5n+3)\equiv0~(\textup{mod}~5).
\end{align}
We could not transform  the generating function \eqref{colorgen3} of $p_{[1^33^3]}(5n+3)$  effectively to a form similar to \eqref{colorgen2} which could have  immediately implied the above congruence.
\end{remark}


Next, Zhang and Shi \cite{zhang} studied  the sixth order mock theta function
$\beta(q)$, defined by
\begin{align*}
  \beta(q):=\sum_{n=0}^{\infty}\dfrac{q^{3n^2+3n+1}}{(q;q^3)_{n+1}(q^2;q^3)_{n+1}}=:\sum_{n=0}^{\infty}p_{\beta}(n)q^n.
\end{align*}
They proved that
  \begin{align*}
 \sum_{n=0}^\infty p_{\beta}(3n+1)q^n&=\dfrac{E_3^3}{E_1^2}\\\intertext{and}
\sum_{n=0}^\infty p_{\beta}(9n+5)q^n&=3\dfrac{E_3^6}{E_1^5}.
  \end{align*}
They also found some congruences for $p_{\beta}(n)$ modulo 3, 5, and 7. In particular, they proved the following three congruences by using elementary techniques.
\begin{theorem}\label{TheoCongruence}For any integer $n\geq0$, we have
\begin{align}
\label{15n7}p_{\beta}(15n+7)&\equiv0~(\textup{mod~5}),\\
\label{45n23}p_{\beta}(45n+23)&\equiv 0~(\textup{mod~15})\\\intertext{and}
\label{45n41}p_{\beta}(45n+41)&\equiv0~(\textup{mod~15}).
\end{align}
\end{theorem}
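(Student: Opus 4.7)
The plan is to derive each of \eqref{15n7}, \eqref{45n23}, and \eqref{45n41} directly from the two Zhang--Shi identities for $\sum p_{\beta}(3n+1)q^n$ and $\sum p_{\beta}(9n+5)q^n$ cited just above the theorem, using only Jacobi's triple-product identity and Fermat's little theorem modulo $5$. Writing $15n+7=3(5n+2)+1$, $45n+23=9(5n+2)+5$, and $45n+41=9(5n+4)+5$, the three congruences amount to the assertions that $[q^{5n+2}](E_3^3/E_1^2)$, $[q^{5n+2}](3E_3^6/E_1^5)$, and $[q^{5n+4}](3E_3^6/E_1^5)$ are all divisible by $5$. The explicit factor of $3$ in the second Zhang--Shi identity is what promotes the last two congruences from modulo $5$ to modulo $15$, so no additional work is needed for the $3$-part.

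The Frobenius-type identity $E_1^5\equiv E_5\pmod 5$ reduces these to coefficient statements for
$$\frac{E_3^3 E_1^3}{E_5}\quad\text{and}\quad\frac{E_3^6}{E_5}\pmod 5.$$
Because the factor $1/E_5$ only contributes exponents divisible by $5$, it suffices to analyse the mod-$5$ residue classes of the exponents in $E_3^3 E_1^3$ and $E_3^6$. The main tool is Jacobi's identity
$$E_1^3=\sum_{n\geq 0}(-1)^n(2n+1)q^{n(n+1)/2},$$
together with the crucial arithmetic observation that $n(n+1)/2\pmod 5$ takes only the values $\{0,1,3\}$, with the residue $3$ occurring precisely when $n\equiv 2\pmod 5$, in which case $2n+1\equiv 0\pmod 5$. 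Thus every term of $E_1^3$ whose exponent is $\equiv 3\pmod 5$ already has coefficient divisible by $5$; replacing $q$ by $q^3$, every term of $E_3^3$ whose exponent is $\equiv 4\pmod 5$ is divisible by $5$, and the possible exponent residues for $E_3^3$ lie in $\{0,3,4\}\pmod 5$.

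The three congruences will then follow by tracking which convolutions can produce the target residue. For $E_3^3 E_1^3$, the only way to obtain an exponent $\equiv 2\pmod 5$ is to combine a $4$-term from $E_3^3$ with a $3$-term from $E_1^3$, and both such terms are individually divisible by $5$. For $E_3^6=E_3^3\cdot E_3^3$, an exponent $\equiv 2\pmod 5$ arises only from pairing a $3$-term with a $4$-term, and an exponent $\equiv 4\pmod 5$ only from pairing a $0$-term with a $4$-term; in each case at least one factor sits in the distinguished residue class $4$ and is therefore divisible by $5$. Consequently $[q^{5n+2}](E_3^3 E_1^3)$, $[q^{5n+2}](E_3^6)$, and $[q^{5n+4}](E_3^6)$ all vanish modulo $5$, which after dividing by $E_5$ yields \eqref{15n7}, \eqref{45n23}, and \eqref{45n41}.

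I do not anticipate a real obstacle: the argument is essentially a short residue-class bookkeeping, and the only non-trivial ingredient is the pleasant coincidence that $n\equiv 2\pmod 5$ forces $2n+1\equiv 0\pmod 5$, which makes the relevant slice of Jacobi's identity automatically divisible by $5$ and drives the entire proof. The only point requiring care is to tabulate the reachable exponent residues for $E_1^3$ and $E_3^3$ correctly so that every convolution producing a residue $2$ or $4\pmod 5$ really does include a distinguished $4$-term or $3$-term.
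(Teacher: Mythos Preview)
Your proof is correct. The residue-class bookkeeping is accurate: for $E_1^3$ the exponent residues modulo $5$ are $\{0,1,3\}$ with the class $3$ coming exactly from $n\equiv 2\pmod 5$ (hence coefficient $2n+1\equiv 0$), and after $q\mapsto q^3$ the residues for $E_3^3$ are $\{0,3,4\}$ with the class $4$ distinguished. Your convolution tables are right: for $E_3^3E_1^3$ only $(4,3)$ hits residue $2$, and for $E_3^3\cdot E_3^3$ only $(3,4),(4,3)$ hit residue $2$ and only $(0,4),(4,0)$ hit residue $4$, so a factor of $5$ is always present.

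However, your route is genuinely different from the paper's. For \eqref{15n7} the paper does \emph{not} work modulo $5$ at all: it employs the full $5$-dissections of $E_1$ and $1/E_1$ in terms of $R(q^5)$ (Lemma~\ref{Dissect}), extracts the $q^{5n+2}$ part of $E_3^3/E_1^2$, and then uses the $R(q)$--$R(q^3)$ identities of Lemma~\ref{LemmaC} (and the auxiliary combinations $C_1,\dots,C_4$) to obtain an \emph{exact} generating function for $p_\beta(15n+7)$ with an explicit factor of $5$ in front. For \eqref{45n23} and \eqref{45n41} the paper reduces $3E_3^6/E_1^5$ further than you do, writing it as $3E_{15}E_3/E_5\pmod 5$ and then invoking the $5$-dissection of $E_3$ via $R(q^{15})$; since that dissection has pieces only in residues $0,1,3\pmod 5$, the $[q^{5n+2}]$ and $[q^{5n+4}]$ extractions vanish immediately. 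Your argument is more elementary and entirely self-contained (Jacobi plus Fermat), and is in fact closer in spirit to the original Zhang--Shi proof that the paper set out to replace; what the paper's approach buys is an exact identity for $\sum p_\beta(15n+7)q^n$, which in principle could be mined for congruences modulo higher powers of $5$, as the authors note in their subsequent remark.
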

In this paper, we present alternative proofs of the above congruences by using relations between $R(q)$ and $R(q^3)$.

Finally, we present the following two new results on $b_4(n)$ and $a_4(n)$ that are obtained by  using identities involving $R(q)$ and $R(q^4)$.

\begin{theorem}\label{Theo3}
For any integer $n\geq0$, we have
\begin{align}
\label{ell1}\sum_{n=0}^\infty b_4(5n+3)q^n&=3\dfrac{E_2^2E_{10}^6}{E_1^5E_4E_{20}^2}+q\dfrac{E_2^4E_5^5E_{20}^3}{E_1^6E_4^2E_{10}^4}+4q^2\dfrac{E_2^3E_{10}E_{20}^3}{E_1^5E_4^2}.
\end{align}
\end{theorem}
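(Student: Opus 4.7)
\textbf{Proof proposal for Theorem \ref{Theo3}.} The plan is to adapt the method used in the preceding sections of the paper (and in \cite{bb-distinct}): start from the generating function $\sum_{n\ge 0} b_4(n)q^n = E_4/E_1$, carry out a $5$-dissection by using Ramanujan's $R(q^5)$-factorization of the infinite product $(q;q)_\infty$ on both $E_1$ and $E_4$, and then invoke a modular relation between $R(q)$ and $R(q^4)$ to eliminate one of the two continued fractions that appear.

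Concretely, I would first invoke Ramanujan's classical identity
$E_1/E_{25}=R(q^5)^{-1}-q-q^2R(q^5)$, together with its analogue $E_4/E_{100}=R(q^{20})^{-1}-q^4-q^8R(q^{20})$ obtained by replacing $q$ with $q^4$. Taking the ratio yields
\begin{equation*}
\frac{E_4}{E_1}=\frac{E_{100}}{E_{25}}\cdot\frac{R(q^{20})^{-1}-q^4-q^8R(q^{20})}{R(q^5)^{-1}-q-q^2R(q^5)}.
\end{equation*}
At this stage the $5$-dissection is obstructed by the simultaneous presence of $R(q^5)$ and $R(q^{20})$. I would then substitute the modular equation between $R(q)$ and $R(q^4)$ (with $q$ replaced by $q^5$) of the type established earlier in the paper; such a relation expresses a suitable polynomial combination of $R(q^{20})$ as a rational function of $R(q^5)$ and eta quotients whose indices are multiples of $5$.

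After this substitution the right-hand side becomes a rational function of $R(q^5)$ multiplied by eta quotients that depend only on $q^5$. Expanding it as a Laurent polynomial in $R(q^5)$ separates the five residue classes modulo $5$; reading off the piece of degree $3$ in $q$ and then replacing $q^5$ by $q$ produces the desired subseries $\sum b_4(5n+3)q^n$ as an eta-quotient identity, now involving $E_1, E_4, E_5, E_{20}$ and the (formerly $E_{25}, E_{100}$) factors that have dropped to $E_5, E_{20}$.

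The principal obstacle is the final eta-quotient simplification, not the mechanics of the dissection. The raw output is a long sum of eta quotients, and collapsing it to precisely the three terms in \eqref{ell1} with coefficients $3$, $1$, $4$ requires careful bookkeeping together with standard theta-function identities---in particular those involving $\psi(q)=E_2^2/E_1$ and $\phi(-q)=E_1^2/E_2$ and their images under $q\mapsto q^5$---to introduce the factors $E_2$ and $E_{10}$ that appear on the right-hand side. A minor but easy-to-mishandle technical point is tracking the fractional exponents coming from $R(q^5)$ (which, unlike $\mathcal R(q^5)$, already carries no $q^{1/5}$) consistently throughout so that the final identity is between honest $q$-series with integer exponents.
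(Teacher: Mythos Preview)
Your overall strategy---$5$-dissect $E_4/E_1$ via Ramanujan's continued-fraction identity, then remove the continued fractions using a relation between $R(q)$ and $R(q^4)$---is exactly the paper's. But the order of operations you describe does not work. You propose to substitute a modular relation so as to express ``a suitable polynomial combination of $R(q^{20})$ as a rational function of $R(q^5)$'' \emph{before} extracting the $q^{5n+3}$ part. The relations actually available (Lemmas~\ref{LemmaD} and~\ref{LemmaF}) do no such thing: they evaluate only specific \emph{bilinear} combinations such as $R(q)R(q^4)+q^2/(R(q)R(q^4))$ as eta products, and there is no way to use them to eliminate $R(q^{20})$ from the raw numerator $R(q^{20})^{-1}-q^4-q^8R(q^{20})$ while a genuine quotient by $R(q^5)^{-1}-q-q^2R(q^5)$ sits underneath. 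Relatedly, it is not the $R(q^5)$-degree that separates the residue classes: since $R(q^5),R(q^{20})\in\mathbb{Z}[[q^5]]$, every monomial $q^kR(q^5)^aR(q^{20})^b$ lands in the class $k\bmod 5$ regardless of $a,b$.

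The paper therefore reverses your two steps. It uses the ready-made expansion \eqref{1byE_1} for $1/E_1$ (so no quotient ever has to be inverted), multiplies by the $q\mapsto q^4$ instance of \eqref{E_1}, and extracts $[q^{5n+3}]$ mechanically by selecting the monomials with $k\equiv 3$. What comes out is already organized into exactly two symmetric combinations, $D_1=R(q)R(q^4)+q^2/(R(q)R(q^4))$ and $D_2:=R(q)^4/R(q^4)-R(q^4)/R(q)^4$; only now are the modular relations applied, with $D_1$ given by Lemma~\ref{LemmaD} and $D_2$ computed as $BF+B(q^2)$ from Lemmas~\ref{LemmaA} and~\ref{LemmaF}. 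The final collapse from the resulting five terms to the three in \eqref{ell1} is not generic $\psi$/$\phi$ bookkeeping: the paper factors the five-term sum explicitly and then applies a single identity quoted from \cite{bb-distinct}.
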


\begin{theorem}\label{Theo4}For any integer $n\geq0$, we have
\begin{align}
\label{4coregen}\sum_{n=0}^{\infty}a_{4}(5n)q^{n}&=\dfrac{E_4^4E_{10}^{40}}{E_1^2E_2^8E_5^{15}E_{20}^{16}}-q\Bigg(
3\dfrac{E_4^2E_{10}^{15}}{E_1^5E_2^3E_{20}^{6}}
-4\dfrac{E_4^3E_{10}^{30}}{E_1^3E_2^6E_5^{10}E_{20}^{11}}\Bigg)
\nonumber\\
&\quad-q^2\Bigg(12\dfrac{E_4^2E_{10}^{20}}{E_1^4E_2^4E_5^{5}E_{20}^{6}}+20\dfrac{E_4E_5^{5}E_{10}^{5}}{E_1^6E_2E_{20}}-24\dfrac{E_4^3E_{10}^{35}3}{E_1^2E_2^7E_5^{15}E_{20}^{11}}\Bigg)
\nonumber\\
&\quad-q^3\Bigg(27\dfrac{E_2E_5^{10}E_{20}^{4}}{E_1^7}+60\dfrac{E_4E_{10}^{10}}{E_1^5E_2^2E_{20}}-196\dfrac{E_4^2E_{10}^{25}}{E_1^3E_2^5E_5^{10}E_{20}^{6}}\Bigg)\nonumber\\
&\quad-q^4\Bigg(83\dfrac{E_5^{5}E_{20}^{4}}{E_1^6}-456\dfrac{E_4E_{10}^{15}}{E_1^4E_2^3E_5^{5}E_{20}}\Bigg)
+q^5\Bigg(296\dfrac{E_{10}^{5}E_{20}^{4}}{E_1^5E_2}+96\dfrac{E_4E_{10}^{20}}{E_1^3E_2^4E_5^{10}E_{20}}\Bigg)\nonumber\\
&\quad+q^6\Bigg(128\dfrac{E_2E_5^{5}E_{20}^{9}}{E_1^6E_4E_{10}^{5}}
+592\dfrac{E_{10}^{10}E_{20}^{4}}{E_1^4E_2^2E_5^{5}}\Bigg)
+512q^7\dfrac{E_{20}^{9}}{E_1^5E_4}.
\end{align}
\end{theorem}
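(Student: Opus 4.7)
The plan is to follow the 5-dissection strategy of Baruah and Begum \cite{bb-distinct}, now using the analogues of their key identities at level $20$. I start with the generating function
\begin{align*}
\sum_{n\geq 0} a_{4}(n)\,q^{n} \;=\; \frac{E_{4}^{4}}{E_{1}},
\end{align*}
and aim to isolate the part consisting of powers $q^{5n}$. The two core ingredients are Ramanujan's classical identity
\begin{align*}
\frac{E_{1}}{E_{25}} \;=\; \frac{1}{R(q^{5})} - q - q^{2} R(q^{5}),
\end{align*}
together with Watson's companion
\begin{align*}
\frac{E_{5}^{6}}{E_{25}^{6}} \;=\; \frac{1}{R(q^{5})^{5}} - 11 q^{5} - q^{10} R(q^{5})^{5},
\end{align*}
and their counterparts obtained by replacing $q$ by $q^{4}$, which introduce $R(q^{20})$.

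First I would rationalize $1/E_{1}$ by dividing Watson's identity by Ramanujan's, obtaining
\begin{align*}
\frac{1}{E_{1}} \;=\; \frac{E_{25}^{5}}{E_{5}^{6}}\, P\!\bigl(q,\, R(q^{5})\bigr),
\end{align*}
where $P$ is a polynomial of degree $4$ in $q$ whose coefficients are Laurent polynomials in $R(q^{5})$. Next, the $q\mapsto q^{4}$ version of Ramanujan's identity, raised to the fourth power, expresses $E_{4}^{4}$ as $E_{100}^{4}$ times an explicit polynomial of degree $8$ in $q^{4}$ with coefficients in $R(q^{20})$. Multiplying and collecting terms gives
\begin{align*}
\frac{E_{4}^{4}}{E_{1}} \;=\; \sum_{j=0}^{4} q^{j}\, B_{j}(q^{5}),
\end{align*}
where each $B_{j}$ is a finite combination of $E_{5}$, $E_{25}$, $E_{100}$ and rational expressions in $R(q^{5})$ and $R(q^{20})$. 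The generating function for $a_{4}(5n)$ is $B_{0}(q^{5})$.

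The heart of the proof is then to apply the $R(q)$--$R(q^{4})$ relation promised in the introduction, specialized to the pair $R(q^{5})$ and $R(q^{20})$. This modular-equation type identity, together with the standard eta-quotient formulas for $R$, rewrites all occurrences of the level $25$ and $100$ eta functions in terms of $E_{1}, E_{2}, E_{4}, E_{5}, E_{10}, E_{20}$, and collapses the many formally distinct terms of $B_{0}$ into exactly the ten summands appearing in \eqref{4coregen}; replacing $q^{5}$ by $q$ yields the stated formula. The main obstacle is this final simplification: the polynomial $P(q, R(q^{5}))$ has five terms and the expansion of $E_{4}^{4}$ has nine, so the product yields on the order of forty-five monomial contributions to $B_{0}$, and only after the targeted use of the $R(q)$--$R(q^{4})$ identity do the resulting $R$-expressions consolidate into ten surviving eta-quotients with the precise integer coefficients $1,-3,4,-12,-20,24,\ldots,512$. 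A symbolic algebra system is essentially indispensable for managing this bookkeeping, and a $q$-series check up to sufficiently high order (governed by the valence formula at the cusps of $\Gamma_{0}(20)$) provides an independent verification.
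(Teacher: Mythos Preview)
Your overall strategy---five-dissect $E_4^4/E_1$ via Lemma~\ref{Dissect}, extract the residue-$0$ piece, and then eliminate the $R$-functions using modular relations---is exactly the route the paper takes. However, two points in your write-up do not match what is actually required.

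First, a small slip: the factor $P$ in the expansion of $1/E_1$ is not of degree $4$ with five terms; it has nine terms, running from $R(q^5)^4$ up to $q^8/R(q^5)^4$ (see \eqref{1byE_1}). This does not affect the strategy, but your term count ``forty-five monomial contributions'' is correspondingly off.

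Second, and more substantively, a single $R(q)$--$R(q^4)$ relation does not suffice to close the computation. After extraction, the right-hand side (the paper's display \eqref{c}) contains both the symmetric combinations $R(q)^kR(q^4)^k+q^{2k}/(R(q)^kR(q^4)^k)$ and the skew combinations such as $R(q)^4/R(q^4)-R(q^4)/R(q)^4$ and $R(q^4)^j/R(q)^i-q^{\ast}R(q)^i/R(q^4)^j$. The symmetric ones all reduce to polynomials in $D_1=R(q)R(q^4)+q^2/(R(q)R(q^4))$ alone, but the skew ones do not: the paper evaluates $D_2:=R(q)^4/R(q^4)-R(q^4)/R(q)^4$ by the factorization $D_2=BF+B(q^2)$, which brings in the $R(q^2)$-based identities $B$ of Lemma~\ref{LemmaA} and $F$ of Lemma~\ref{LemmaF}. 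Only after $D_2$ is in hand can one build $D_3,\ldots,D_5$ recursively from $A$, $D_1$, $D_2$ (equations \eqref{D_3}--\eqref{D_5}) and finish the reduction. Your proposal mentions only ``the $R(q)$--$R(q^4)$ relation'' and the standard eta formula for $R$; that toolkit is not enough to express $D_2$ as an eta quotient, so the simplification step as you describe it has a genuine gap.

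Your closing remark about verifying the identity by a $q$-expansion check governed by the valence formula on $\Gamma_0(20)$ is a legitimate alternative route and would in principle certify \eqref{4coregen}, but that is a different argument from the constructive derivation the paper gives, and you would still owe the reader explicit bounds and a confirmation that every term is a modular function of the required level and weight.
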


This work is organized as follows. In Section \ref{prelims}, we present some preliminary lemmas. In Section \ref{TheoAndCoro}, we prove our results in Theorem \ref{Theo1}  -- Theorem \ref{TheoCongruence}. In the final section we prove Theorem \ref{Theo3} and Theorem  \ref{Theo4}.

\section{\textbf{Preliminary lemmas}}\label{prelims}
The first lemma comprises of the well-known 5-dissections of $E_1$ and $1/E_1$.
\begin{lemma} \label{Dissect} We have
\begin{align}
   \label{E_1} E_1&=E_{25}\bigg(R(q^5)-q-\dfrac{q^2}{R(q^5)}\bigg)\\\intertext{and}
  \label{1byE_1}  \dfrac{1}{E_1}&=\dfrac{E_{25}^5}{E_{5}^6}\bigg(R(q^5)^4 + q R(q^5)^3+ 2 q^2 R(q^5)^2 + 3 q^3 R(q^5) + 5 q^4 - 3 \dfrac{q^5}{R(q^5)} \nonumber\\
  &\quad+
 2 \dfrac{q^6}{R(q^5)^2} - \dfrac{q^7}{R(q^5)^3} + \dfrac{q^8}{R(q^5)^4}\bigg).
\end{align}
\end{lemma}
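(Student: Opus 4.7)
The plan for \eqref{E_1} is to start from Euler's pentagonal number theorem
$$E_1 = \sum_{n\in\mathbb{Z}} (-1)^n q^{n(3n-1)/2}$$
and split the sum by the residue of $n$ modulo $5$. Writing $n = 5m + r$ with $r \in \{-2,-1,0,1,2\}$, one has
$$\frac{n(3n-1)}{2} \;=\; \frac{5m(15m+6r-1)}{2} + \frac{r(3r-1)}{2},$$
so each residue class contributes a theta series in $q^{25}$ scaled by the base factor $(-1)^r q^{r(3r-1)/2}$. The generalized pentagonal numbers $r(3r-1)/2$ for $r \in \{-2,-1,0,1,2\}$ take the values $7,\,2,\,0,\,1,\,5$, producing only three residues modulo $5$, namely $0,\,1,\,2$. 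Applying the Jacobi triple product converts each inner sum into an infinite product of the form $(q^{\alpha};q^{75})_\infty\, (q^{\beta};q^{75})_\infty\, (q^{75};q^{75})_\infty$ with $\alpha + \beta = 75$.

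The central residue class $r=1$ immediately yields $-q\,(q^{25};q^{75})_\infty (q^{50};q^{75})_\infty (q^{75};q^{75})_\infty = -q\, E_{25}$, since those three product factors amalgamate into $E_{25}$. For the two residue-$0$ terms (from $r = 0$ and $r = 2$) and the two residue-$2$ terms (from $r = -1$ and $r = -2$), I would use the product representation
$$R(q^5) \;=\; \frac{(q^{10};q^{25})_\infty\,(q^{15};q^{25})_\infty}{(q^{5};q^{25})_\infty\,(q^{20};q^{25})_\infty},$$
refine each Pochhammer factor into three sub-products indexed modulo $75$, and verify the identifications
$$(\text{residue-0 sum}) \;=\; E_{25}\, R(q^5), \qquad (\text{residue-2 sum}) \;=\; \frac{E_{25}}{R(q^5)}.$$
Assembling the five pieces then yields $E_1 = E_{25}\bigl(R(q^5) - q - q^2/R(q^5)\bigr)$, which is \eqref{E_1}.

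For \eqref{1byE_1}, I would invoke the classical Ramanujan relation
$$R(q)^5 \;-\; 11 q \;-\; \frac{q^2}{R(q)^5} \;=\; \frac{E_1^6}{E_5^6},$$
which is equivalent to the well-known identity $\mathcal{R}(q)^{-5} - 11 - \mathcal{R}(q)^5 = E_1^6/(q E_5^6)$ and is derivable from the modular-equation theory of $R(q)$. Replacing $q$ by $q^5$ and writing $R := R(q^5)$ gives
$$R^5 - 11 q^5 - \frac{q^{10}}{R^5} \;=\; \frac{E_5^6}{E_{25}^6}.$$
A direct Laurent-polynomial expansion then verifies
$$\Bigl(R - q - \frac{q^2}{R}\Bigr)\, S(R,q) \;=\; R^5 - 11 q^5 - \frac{q^{10}}{R^5},$$
where $S(R,q)$ denotes the nine-term expression in parentheses on the right side of \eqref{1byE_1}; all eight intermediate powers of $R$, from $R^4$ down to $R^{-4}$, cancel, leaving only the three indicated terms. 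Combining this factorization with \eqref{E_1} gives
$$\frac{1}{E_1} \;=\; \frac{1}{E_{25}(R - q - q^2/R)} \;=\; \frac{S(R,q)}{E_{25}\bigl(R^5 - 11 q^5 - q^{10}/R^5\bigr)} \;=\; \frac{E_{25}^5}{E_5^6}\, S(R,q),$$
which is \eqref{1byE_1}.

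The principal obstacle is the product-matching step in the proof of \eqref{E_1}: to identify the Jacobi-triple-product outputs arising from the residue-$0$ and residue-$2$ classes with $E_{25} R(q^5)$ and $E_{25}/R(q^5)$ respectively, one must refine each $(q^a;q^{25})_\infty$ appearing in $R(q^5)$ into progressions modulo $75$ and match them factor-by-factor against those produced by the Jacobi triple product. The Laurent-polynomial verification used in the second part is entirely mechanical, and the Ramanujan-type identity invoked there is a well-known input from the theory of modular equations of level $5$.
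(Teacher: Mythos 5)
Your treatment of \eqref{1byE_1} is sound: the Laurent-polynomial factorization $\bigl(R-q-q^2/R\bigr)S(R,q)=R^5-11q^5-q^{10}/R^5$ is correct (all the intermediate powers of $R$ from $R^4$ down to $R^{-4}$ do cancel), and combining it with \eqref{E_1} and with \eqref{A} at $q\mapsto q^5$ yields \eqref{1byE_1} exactly as you describe; since \eqref{A} is Lemma \ref{LemmaA} of the paper and is not derived from \eqref{1byE_1}, there is no circularity. (The paper itself gives no proof of Lemma \ref{Dissect}, only a citation to Berndt, so the comparison can only concern whether your argument stands on its own.)

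The genuine gap is in your proof of \eqref{E_1}, at precisely the step you flag as the principal obstacle. After splitting Euler's series according to $n\bmod 5$, the residue-$0$ contribution is a \emph{sum of two} theta products, namely (by the triple product, with your values of $\alpha,\beta$)
\begin{align*}
(q^{35};q^{75})_\infty(q^{40};q^{75})_\infty E_{75}+q^{5}\,(q^{10};q^{75})_\infty(q^{65};q^{75})_\infty E_{75},
\end{align*}
and you must show that this equals $E_{25}R(q^5)=\dfrac{(q^{10};q^{25})_\infty(q^{15};q^{25})_\infty}{(q^{5};q^{25})_\infty(q^{20};q^{25})_\infty}\,E_{25}$, together with a companion statement (a difference of two such products, coming from $r=-1$ and $r=-2$) for the residue-$2$ class. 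Refining each Pochhammer factor into subprogressions modulo $75$ and matching factor-by-factor cannot accomplish this: the left-hand side is not a single infinite product, so there are no factors to match. What is needed is a genuine additional theta identity; the two required statements are exactly instances of the quintuple product identity (equivalently, of the standard $5$-dissection identities for the products attached to the Rogers--Ramanujan functions). Unless you import that identity, or prove these two sum-equals-product identities by some other honest argument (for instance a series rearrangement of the kind used to prove the quintuple product identity), your proof of \eqref{E_1} is incomplete at its decisive step, and consequently so is your proof of \eqref{1byE_1}, which relies on it.
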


\begin{proof}
See \cite[Chapter 7, pp. 161--165]{Spirit}.
\end{proof}

In the next lemma, we present two useful relations among $R(q)$, $R(q^2)$, and  $E_n$.
\begin{lemma}\label{LemmaA}
We have
\begin{align}
 \label{A}A&=A(q):= R(q)^5-\dfrac{q^2}{R(q)^5}=11q+\dfrac{E_1^6}{E_5^6}\\\intertext{and}
 B&=B(q):=\dfrac{R(q)^2}{R(q^2)}-\dfrac{R(q^2)}{R(q)^2}=4q\dfrac{E_1E_{10}^5}{E_2E_5^5}\notag.
\end{align}
\end{lemma}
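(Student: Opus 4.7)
Both identities are classical and originate with Ramanujan; my plan is to derive them from the 5-dissection of $E_1$ in Lemma~\ref{Dissect}. Substituting $q \mapsto q^{1/5}$ in \eqref{E_1} (which sends $E_{25}$ to $E_1$) yields
\begin{equation*}
(q^{1/5};q^{1/5})_\infty = E_1\left(R(q) - q^{1/5} - \frac{q^{2/5}}{R(q)}\right) =: E_1\, T(q),
\end{equation*}
so that $T(q)^5 = (q^{1/5};q^{1/5})_\infty^5 / E_1^5$. For identity~$A$, I would compare the components on both sides containing only integer powers of $q$. On the left, the multinomial expansion of $T(q)^5$ contributes an integer power of $q$ only for $(a,b,c) \in \{(5,0,0),(0,5,0),(0,0,5),(1,3,1),(2,1,2)\}$ in the triple summation with general term $\binom{5}{a,b,c} R(q)^a (-q^{1/5})^b (-q^{2/5}/R(q))^c$; these five contributions sum to $R(q)^5 - q^2/R(q)^5 - 11q$, where the coefficient $-11$ arises from $-1 + 20 - 30$ via the three middle triples. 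On the right, the integer-power-of-$q$ component of $(q^{1/5};q^{1/5})_\infty^5 / E_1^5$ equals $E_1^6/E_5^6$ by a standard theta-function computation: split $(q^{1/5};q^{1/5})_\infty$ into its five pieces indexed by residues modulo $5$ and apply the quintuple product identity. Equating the two gives identity~$A$.

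For identity~$B$, my plan is to combine the 5-dissection above with the classical Ramanujan modular equation of degree~$2$ relating $\mathcal{R}(q)$ and $\mathcal{R}(q^2)$. Concretely, I would rewrite $R(q)^2/R(q^2) - R(q^2)/R(q)^2$ as the single quotient $(R(q)^4 - R(q^2)^2)/(R(q)^2 R(q^2))$, express both numerator and denominator as products of theta functions via the infinite-product representation of $R$, and then invoke Jacobi's triple product (twice, once at argument $q$ and once at $q^2$) to reduce the ratio to the compact form $4q\, E_1 E_{10}^5/(E_2 E_5^5)$. Alternatively, one may start from Ramanujan's classical relation between $\mathcal{R}(q)$ and $\mathcal{R}(q^2)$ (recorded in Berndt's \emph{Ramanujan's Notebooks}), from which identity~$B$ drops out after elementary algebra.

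The main obstacle in both proofs is the theta-function identification that produces the compact forms on the right-hand sides: in identity~$A$, showing that the integer-power component of $(q^{1/5};q^{1/5})_\infty^5 / E_1^5$ equals $E_1^6/E_5^6$ is the heart of the matter and requires the quintuple product identity; in identity~$B$, the analogous step is the recognition of $(R(q)^4 - R(q^2)^2)/(R(q)^2 R(q^2))$ as the stated theta quotient after careful bookkeeping. Both identities are classical results due to Ramanujan and are recorded in Berndt's \emph{Ramanujan's Notebooks}, so a self-contained exposition would most likely cite those sources rather than reproduce the details.
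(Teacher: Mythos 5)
First, note that the paper itself offers no argument for this lemma: its ``proof'' is the citation \cite[Ch.~7, p.~164]{Spirit} for \eqref{A} and \cite[Lemma 2.2.1]{bb-distinct} for the second identity, so your sketch has to be judged on whether it closes on its own. For \eqref{A} your extraction on the left-hand side is correct (the five triples and the bookkeeping $-1+20-30=-11$ check out), but two repairs are needed. First, a slip: $q\mapsto q^{1/5}$ in \eqref{E_1} sends $E_{25}$ to $E_5$, not to $E_1$, so $(q^{1/5};q^{1/5})_\infty=E_5\,T(q)$ and the statement you must prove is that the integer-power part of $(q^{1/5};q^{1/5})_\infty^5/E_5^5$ equals $E_1^6/E_5^6$; as you wrote it (with $E_1^5$ in the denominator) the identification is false. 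Second, and more seriously, your justification of that key step is circular as described: the ``five pieces of $(q^{1/5};q^{1/5})_\infty$ indexed by residues modulo $5$'' are exactly the three terms of \eqref{E_1}, so raising that splitting to the fifth power and extracting integer powers merely reproduces $R(q)^5-q^2/R(q)^5-11q$; the quintuple product gives no independent evaluation of this component, which is in fact equivalent to \eqref{A} itself. The classical way to close the gap is the fifth-root-of-unity product: with $\zeta=e^{2\pi i/5}$, replace $q^{1/5}$ by $\zeta^{j}q^{1/5}$ in the dissection and multiply over $j=0,\dots,4$, using
\begin{align*}
\prod_{j=0}^{4}(\zeta^{j}q^{1/5};\zeta^{j}q^{1/5})_\infty=\dfrac{E_1^6}{E_5}
\qquad\text{and}\qquad
\prod_{j=0}^{4}\left(x-\zeta^{j}y-\dfrac{\zeta^{2j}y^2}{x}\right)=x^5-11y^5-\dfrac{y^{10}}{x^5},
\end{align*}
the first obtained by grouping the factors $1-\zeta^{jn}q^{n/5}$ according to whether $5\mid n$, the second being precisely your multinomial computation; taking $x=R(q)$, $y=q^{1/5}$ then yields \eqref{A} at once.

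For the second identity the gap is substantive. Jacobi's triple product expresses $R(q)^4$ and $R(q^2)^2$ individually as theta quotients, but it says nothing about their difference, so ``invoke the triple product twice'' cannot turn $(R(q)^4-R(q^2)^2)/(R(q)^2R(q^2))$ into the single product $4qE_1E_{10}^5/(E_2E_5^5)$; that conversion is the entire content of the identity. The alternative you mention is equally underdetermined: Ramanujan's degree-two relation is one algebraic equation in $\mathcal{R}(q)$ and $\mathcal{R}(q^2)$, so elementary algebra can only rewrite $B$ as another rational expression in these two quantities --- for instance, $(\mathcal{R}(q^2)-\mathcal{R}(q)^2)/(\mathcal{R}(q^2)+\mathcal{R}(q)^2)=\mathcal{R}(q)\mathcal{R}(q^2)^2$ gives $B=\mathcal{R}(q^2)(\mathcal{R}(q^2)+\mathcal{R}(q)^2)^2/\mathcal{R}(q)$ --- and to land on an eta quotient you still need a theta-function evaluation tying such combinations to products (for example Gugg's identities for squares of the Rogers--Ramanujan functions \cite{Gugg1}, or the $\psi$-quotient formulas behind \cite[Lemma 2.2.1]{bb-distinct}). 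That missing ingredient is exactly what the cited sources supply; without it your plan for $B$ names the destination but not the route.
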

\begin{proof} See \cite[Chapter 7, p. 164]{Spirit} and \cite[Lemma 2.2.1]{bb-distinct}.
\end{proof}

Some relations among $R(q)$, $R(q^3)$, and  $E_n$ are stated in the following lemma.
\begin{lemma}\label{LemmaC}We have
\begin{align}
  \label{C_1}C_1&:= \dfrac{R(q)^3}{R(q^3)}+\dfrac{R(q^3)}{R(q)^3}=2+9q^2\dfrac{E_1E_{15}^5}{E_3E_5^5},\\
\label{C_2}C_2&:= R(q) R(q^3)^3+\dfrac{q^4}{R(q) R(q^3)^3}=\dfrac{E_3E_5^5}{E_1E_{15}^5}-2q^2\\ \intertext{and}
 \label{C_3}C_3&:=R(q)^2 R(q^3)- \dfrac{R(q^3)^2}{R(q)}+q^2\dfrac{R(q)}{R(q^3)^2}-\dfrac{q^2}{R(q)^2 R(q^3)} =3q.
 \end{align}
 \end{lemma}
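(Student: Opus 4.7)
The three identities of Lemma \ref{LemmaC} are classical relations of Ramanujan between the Rogers-Ramanujan continued fractions $R(q)$ and $R(q^3)$, and the key observation for an efficient proof is that they are algebraically linked, so only two of them need to be established independently.

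Set $u := R(q)$ and $v := R(q^3)$. Then
\begin{align*}
C_1 - 2 &= \dfrac{u^3}{v} + \dfrac{v}{u^3} - 2 = \dfrac{(u^3 - v)^2}{u^3 v},\\
C_2 + 2q^2 &= uv^3 + \dfrac{q^4}{uv^3} + 2q^2 = \dfrac{(uv^3 + q^2)^2}{uv^3},
\end{align*}
and multiplying these two relations gives
\begin{equation*}
(C_1 - 2)(C_2 + 2q^2) = \dfrac{\bigl[(u^3 - v)(uv^3 + q^2)\bigr]^2}{u^4 v^4}.
\end{equation*}
On the other hand, clearing denominators in \eqref{C_3} by $u^2 v^2$ yields the polynomial form
\begin{equation*}
(u^3 - v)(uv^3 + q^2) = 3q\, u^2 v^2,
\end{equation*}
whose square, substituted above, produces the clean relation
\begin{equation*}
(C_1 - 2)(C_2 + 2q^2) = 9q^2.
\end{equation*}
Thus, once \eqref{C_3} and any one of \eqref{C_1}, \eqref{C_2} are known, the third identity is an immediate consequence of this last relation.

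It remains to establish \eqref{C_3} and one of \eqref{C_1}, \eqref{C_2} independently. I would target \eqref{C_3} first: writing $u$ and $v$ through the Rogers-Ramanujan product representation and applying Jacobi's triple product to each factor, the polynomial form of \eqref{C_3} becomes a level-$15$ theta function identity, which is a classical modular equation of Ramanujan recorded in the Lost Notebook (see, e.g., Berndt's \emph{Ramanujan's Notebooks}, Part III, and related work of Baruah and Saikia on parameters for theta functions). Next I would prove \eqref{C_2} in the same spirit: expanding both $R(q)R(q^3)^3$ and $q^4/(R(q)R(q^3)^3)$ as infinite products, combining them into a symmetric theta expression, and simplifying to obtain $E_3 E_5^5/(E_1 E_{15}^5) - 2q^2$. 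The identity \eqref{C_1} then follows at once from $(C_1 - 2)(C_2 + 2q^2) = 9q^2$.

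The main obstacle is the direct verification of the two theta-function identities at level $15$, because Lemmas \ref{Dissect} and \ref{LemmaA} are level-$5$ tools and do not by themselves connect $R(q)$ with $R(q^3)$. The level-$15$ manipulations therefore require separate theta-function machinery (Jacobi triple product, quintuple product, and $3$-dissections alongside the $5$-dissections of Lemma \ref{Dissect}); in practice, the cleanest route is to quote \eqref{C_3} and \eqref{C_2} from the classical literature on Ramanujan's continued-fraction modular equations and then use the elementary reduction above to derive the remaining identity.
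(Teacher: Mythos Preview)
Your proposal is correct, and in fact it offers more than the paper does: the paper's proof of Lemma \ref{LemmaC} consists entirely of the citation ``See \cite[Theorem 5.1]{Gugg2} and \cite[p.~194]{AhmedBaruahDastidar},'' with no further argument. So at the level of actually establishing the identities, you and the paper take the same route---quote them from the existing literature on modular equations for $R(q)$ and $R(q^3)$.

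What you add beyond the paper is the elementary algebraic link $(C_1-2)(C_2+2q^2)=9q^2$, obtained by factoring \eqref{C_3} as $(u^3-v)(uv^3+q^2)=3q\,u^2v^2$ and combining it with the perfect-square expressions for $C_1-2$ and $C_2+2q^2$. This is a genuine (and correct) observation that the three identities are not independent, so only two need to be imported; the paper does not make this reduction explicit. It buys a cleaner logical structure but no new analytic content, since the level-$15$ theta work still has to be done (or cited) for the two independent identities, exactly as you acknowledge.
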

 \begin{proof}
 See \cite[Theorem 5.1]{Gugg2} and \cite[p. 194]{AhmedBaruahDastidar}.
 \end{proof}

Our next lemma provides a relation among $R(q)$, $R(q^4)$, and  $E_n$.

 \begin{lemma}\label{LemmaD}We have
\begin{align}
   \label{D_1}D_1&:=R(q)R(q^4)+\dfrac{q^2}{R(q)R(q^4)}=2q+\dfrac{E_1E_4E_{10}^{10}}{E_2^2E_5^5E_{20}^5}.
  \end{align}
  \end{lemma}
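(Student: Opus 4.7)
The plan is to reformulate the identity so it becomes an equivalent theta-function identity, and then to prove that theta identity. Multiplying both sides of the claimed equality by $R(q)R(q^4)$ and rearranging yields the equivalent form
\begin{equation*}
(R(q)R(q^4) - q)^2 = R(q)R(q^4) \cdot \dfrac{E_1 E_4 E_{10}^{10}}{E_2^2 E_5^5 E_{20}^5}.
\end{equation*}

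Let $f(a,b)$ denote Ramanujan's general theta function. I will use the standard product representations $R(q) = f(-q^2,-q^3)/f(-q,-q^4)$ and $R(q^4) = f(-q^8,-q^{12})/f(-q^4,-q^{16})$, together with the immediate Jacobi-triple-product cancellations
\begin{equation*}
f(-q,-q^4)\,f(-q^2,-q^3) = E_1 E_5, \qquad f(-q^4,-q^{16})\,f(-q^8,-q^{12}) = E_4 E_{20}.
\end{equation*}
Clearing denominators in the squared form above and matching leading coefficients to fix the sign upon taking a square root, the statement is equivalent to the theta-function identity
\begin{equation*}
f(-q^2,-q^3)\,f(-q^8,-q^{12}) - q\, f(-q,-q^4)\,f(-q^4,-q^{16}) = \dfrac{E_1 E_4 E_{10}^5}{E_2 E_5^2 E_{20}^2}.
\end{equation*}

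The task is therefore to prove this last identity. A natural route is to expand each theta factor on the left via Jacobi's triple product as a bilateral series, combine the two double lattice sums, and re-index to match the triple-product expansion of the right-hand side; equivalently, both sides can be viewed as modular forms of the same weight and character on $\Gamma_1(20)$, so by Sturm's bound the identity reduces to a finite verification of Fourier coefficients. The principal obstacle is that Ramanujan's standard product-to-sum formula
\begin{equation*}
f(a,b)\,f(c,d) \pm f(-a,-b)\,f(-c,-d) = 2\,f(ac,bd)\,f(ad,bc), \qquad ab = cd,
\end{equation*}
does not apply directly to the two products on the left, since the hypothesis $ab = cd$ fails: we have $ab = q^5$ for the first pair of theta functions but $ab = q^{20}$ for the second. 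A preliminary dissection of each level-$5$ theta function modulo $q^{20}$, or else a suitable Schr\"{o}ter-type theta identity, will be needed to unify the bases before these (or analogous) product-to-sum formulas can be invoked to finish the proof.
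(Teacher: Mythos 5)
Your algebraic reduction is sound: multiplying by $R(q)R(q^4)$, writing $R(q)=f(-q^2,-q^3)/f(-q,-q^4)$, $R(q^4)=f(-q^8,-q^{12})/f(-q^4,-q^{16})$, and using $f(-q,-q^4)f(-q^2,-q^3)=E_1E_5$, $f(-q^4,-q^{16})f(-q^8,-q^{12})=E_4E_{20}$ does show the lemma is equivalent (after the leading-coefficient sign check, which is legitimate in the integral domain of formal power series) to the identity $f(-q^2,-q^3)f(-q^8,-q^{12})-q\,f(-q,-q^4)f(-q^4,-q^{16})=E_1E_4E_{10}^5/(E_2E_5^2E_{20}^2)$, and that identity is indeed true. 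The problem is that your proposal stops exactly there: the final paragraph only lists candidate strategies (bilateral series rearrangement, a Sturm-bound verification on $\Gamma_1(20)$, a Schr\"oter-type identity) and explicitly records the obstruction that Ramanujan's product-to-sum formula needs $ab=cd$, which fails here, without resolving it. Since the reduced theta identity carries the entire analytic content of the lemma, what you have is a correct reformulation plus a research plan, not a proof; in particular the Sturm-bound route would itself need care (these are weight-$1/2$ theta/eta quotients with nontrivial multiplier systems) and is never carried out.

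For comparison, the paper sidesteps this difficulty by quoting Gugg's modular equation $R(q)R(q^4)+q^2/\big(R(q)R(q^4)\big)=3q+\psi^2(-q)/\psi^2(-q^5)$, and then finishing with elementary ingredients: $\psi(q)=E_2^2/E_1$, Hirschhorn's identity $\psi^2(q)-q\,\psi^2(q^5)=E_2E_5^3/(E_1E_{10})$ with $q$ replaced by $-q$, and the product manipulation $E_{-n}=E_{2n}^3/(E_nE_{4n})$, which together give $\psi^2(-q)/\psi^2(-q^5)+q=E_1E_4E_{10}^{10}/(E_2^2E_5^5E_{20}^5)$. If you want to keep your route, the cleanest fix is to recognize that your target identity is essentially this $\psi$-identity in disguise (note $\psi(-q)=E_1E_4/E_2$), so you can either cite the same two results from the literature or supply an actual Schr\"oter-type derivation; as written, the key step is missing.
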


  \begin{proof}
From \cite[Theorem 3.3(iii)]{Gugg1}, we have
\begin{align}
 \label{psieq1} R(q)R(q^4)+\dfrac{q^2}{R(q)R(q^4)}=3q+\dfrac{\psi^2(-q)}{\psi^2(-q^5)},
\end{align}
where
\begin{align*}
\psi(q)&=\sum_{j=0}^\infty q^{j(j+1)/2}.
\end{align*}
From \cite[p. 6 and p. 11]{Spirit}, we have
\begin{align}
\label{SiFunction}\psi(q)&=\dfrac{E_2^2}{E_1}.
\end{align}

Now, from \cite[Chapter 34, p. 313]{HirschPower}, we recall that
\begin{align*}
\psi^2(q)-q\psi^2(q^5)&=\dfrac{E_2E_5^3}{E_1E_{10}}.
\end{align*}
Replacing $q$ by $-q$ in the above, we have
\begin{align}
 \label{phichieq1}\psi^2(-q)+q\psi^2(-q^5)&=\dfrac{E_2E_{-5}^3}{E_{-1}E_{10}},
\end{align}
where for a positive odd integer $n$, $E_{-n}:=(-q^n;-q^n)_\infty$. By elementary $q$-product manipulation, it follows that
\begin{align}
 \label{NegativeReplacement} E_{-n}=\dfrac{E_{2n}^3}{E_nE_{4n}}.
\end{align}
Dividing \eqref{phichieq1} by $\psi^2(-q^5)$ and then employing \eqref{SiFunction}
 and \eqref{NegativeReplacement}, we obtain
 \begin{align}
 \label{lastfrac}\dfrac{\psi^2(-q)}{\psi^2(-q^5)}+q&=\dfrac{E_1E_4E_{10}^{10}}{E_2^2E_{5}^5E_{20}^5}.
\end{align}
Identity \eqref{D_1} now follows readily from
\eqref{psieq1} and \eqref{lastfrac}.
\end{proof}

Our final lemma of this section states a relation among $R(q)$, $R(q^2)$, $R(q^4)$, and  $E_n$.
  \begin{lemma}\label{LemmaF}We have
  \begin{align*}
F&:=\dfrac{R(q)^2R(q^2)}{R(q^4)}+\dfrac{R(q^4)}{R(q)^2R(q^2)}=2+4q^2\dfrac{E_2E_{20}^5}{E_4E_{10}^5}.
\end{align*}
\end{lemma}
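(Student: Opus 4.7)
The plan is to split the right-hand side into its constant part and theta-quotient part, and to recognize the latter as $B(q^2)$. Substituting $q\mapsto q^2$ in Lemma~\ref{LemmaA} gives
\[
\dfrac{R(q^2)^2}{R(q^4)} - \dfrac{R(q^4)}{R(q^2)^2} = 4q^2\dfrac{E_2 E_{20}^5}{E_4 E_{10}^5},
\]
so the claim of Lemma~\ref{LemmaF} is equivalent to the purely algebraic identity
\[
\dfrac{R(q)^2 R(q^2)}{R(q^4)} + \dfrac{R(q^4)}{R(q)^2 R(q^2)} = 2 + \dfrac{R(q^2)^2}{R(q^4)} - \dfrac{R(q^4)}{R(q^2)^2},
\]
a relation involving only $R(q)$, $R(q^2)$, $R(q^4)$ and no theta-quotients.

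Set $W = R(q)^2 R(q^2)/R(q^4)$, so that the left-hand side is $W + 1/W$. Subtracting $2$ from both sides and using $W + 1/W - 2 = (W-1)^2/W$ together with $B(q^2) = [R(q^2)^4 - R(q^4)^2]/[R(q^2)^2 R(q^4)]$, the identity collapses after clearing denominators to the polynomial relation
\[
R(q^2)\bigl[R(q)^2 R(q^2) - R(q^4)\bigr]^2 = R(q)^2\bigl[R(q^2)^4 - R(q^4)^2\bigr],
\]
which is the heart of the matter.

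I would prove this polynomial identity by writing each of $R(q)$, $R(q^2)$, $R(q^4)$ in its infinite-product form, regrouping the factors modulo base $q^{20}$, and applying the Jacobi triple product identity on both sides, which amounts to verifying a level-$20$ modular equation. A cleaner route, parallel to the proof of Lemma~\ref{LemmaD}, is to invoke an appropriate modular equation relating $\mathcal{R}(q)$, $\mathcal{R}(q^2)$, $\mathcal{R}(q^4)$ from Gugg's paper \cite{Gugg1} and then deduce the above polynomial relation by elementary manipulation. The main obstacle lies in this final step: although the algebraic reduction to the polynomial identity is routine, the identity itself is a genuine level-$20$ modular equation and either requires a nontrivial theta-function argument or must be cited from the existing literature on modular relations for the Rogers--Ramanujan continued fraction.
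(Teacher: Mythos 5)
Your preliminary reduction is correct as algebra: replacing $q$ by $q^2$ in Lemma \ref{LemmaA} shows the right-hand side of Lemma \ref{LemmaF} equals $2+B(q^2)$, and your manipulation with $W=R(q)^2R(q^2)/R(q^4)$ correctly shows the lemma is equivalent to the relation
\begin{align*}
R(q^2)\bigl[R(q)^2R(q^2)-R(q^4)\bigr]^2=R(q)^2\bigl[R(q^2)^4-R(q^4)^2\bigr].
\end{align*}
But this is where your proof stops, and that relation is precisely the nontrivial content of the lemma; you never establish it. The first route you sketch (expanding the infinite products, ``regrouping modulo base $q^{20}$'' and ``applying the Jacobi triple product on both sides'') is not a workable argument as described: after expansion one faces a polynomial identity among level-$20$ eta-type quotients, and the triple product identity does not verify such a relation directly — one needs genuine theta-function work (for instance, eliminating between Ramanujan's modular equations connecting $R(q),R(q^2)$ and, with $q\mapsto q^2$, $R(q^2),R(q^4)$) or a modular-forms style verification, neither of which you carry out. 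The second route, ``invoke an appropriate modular equation from Gugg,'' is left unspecified, so the key step is in effect assumed. In short, your identity $F-2=B(q^2)$, equivalently the displayed polynomial relation, transfers the difficulty rather than resolving it: as it stands the heart of Lemma \ref{LemmaF} is unproved.

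For comparison, the paper treats this lemma purely by citation: it quotes Gugg \cite{Gugg1}, Theorem 3.6(ii) together with his Lemma 1.1, where the combination $R(q)^2R(q^2)/R(q^4)+R(q^4)/\bigl(R(q)^2R(q^2)\bigr)$ is expressed in terms of theta functions and then converted into the stated eta-quotient — exactly parallel to how the paper deduces Lemma \ref{LemmaD} from Gugg's Theorem 3.3(iii) via $\psi(-q)$. If you intend to rely on Gugg as well, your detour through the polynomial relation is unnecessary (you could cite the precise result that gives $F$ directly); if you intend a self-contained proof, the detour gives no advantage, because the polynomial relation is exactly as deep as the lemma itself and still requires either the elimination argument indicated above or a precise reference.
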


\begin{proof}See \cite[Theorem 3.6(ii), Lemma 1.1, p. 185]{Gugg1}.
\end{proof}

We end this section by defining an extraction operator. For a power series $\displaystyle{\sum_{n=0}^{\infty}\mathcal{A}(n)q^n}$ and $r=0,1,2,3\; \textup{and}\;4$, we define the operator $[q^{5n+r}]$ by
\begin{align*}
    [q^{5n+r}]\Bigg\{\displaystyle{\sum_{n=0}^{\infty}\mathcal{A}(n)q^n}\Bigg\}&=\displaystyle{\sum_{n=0}^{\infty}\mathcal{A}(5n+r)q^n}.
\end{align*}

\section{\textbf{Proofs of \eqref{3coregen}--\eqref{45n41} using identities satisfied by $R(q)$ and $R(q^3)$ }}\label{TheoAndCoro}


\begin{proof}[Proof of Theorem \ref{Theo1}]
We have
\begin{align*}
\sum_{n=0}^{\infty}a_{3}(n)q^{n}=\dfrac{E_{3}^{3}}{E_1}.
\end{align*}
Employing Lemma \ref{Dissect} in the above and then applying $[q^{5n+3}]$, we find that
\begin{align}
\label{3coregeneve1}
\sum_{n=0}^{\infty}a_{3}(5n+3)q^{n}&=\dfrac{E_5^5E_{15}^5}{E_1^6}\bigg(
3\left(R(q)R(q^3)^3+\dfrac{q^4}{R(q)R(q^3)^3}\right)+q\left(\dfrac{R(q^3)^3}{R(q)^4}-q^2\dfrac{R(q)^4}{R(q^3)^3}\right)\notag\\
&\quad-3\left(R(q)^4R(q^3)^2+\dfrac{q^4}{R(q)^4 R(q^3)^2}-3q\left(\dfrac{R(q^3)^2}{R(q)}-q^2\dfrac{R(q)}{R(q^3)^2}\right)\right)\notag\\
&\quad+25q^2\bigg).
\end{align}

Now, using Lemma \ref{LemmaA} and Lemma \ref{LemmaC}, we have
\begin{align}
   \label{C_4} C_4&:=\dfrac{R(q^3)^3}{R(q)^4}-q^2\dfrac{R(q)^4}{R(q^3)^3}=A-C_3(C_1+1)=\dfrac{E_1^6}{E_5^6}+2 q-27 q^3\dfrac{E_{15}^5 E_1 }{E_3 E_5^5}\\\intertext{and}
&R(q)^4R(q^3)^2+\dfrac{q^4}{R(q)^4R(q^3)^2}=C_2-(C_1-2)q^2+C_3\bigg(R(q)^2R(q^3)-\dfrac{q^2}{R(q)^2R(q^3)}\bigg).\notag\end{align}
Subtracting $3q\big(R(q^3)^2/R(q)-q^2 R(q)/R(q^3)^2\big)$ from both sides of the last identity and then employing Lemma \ref{LemmaC}, we obtain
\begin{align*}
 &R(q)^4R(q^3)^2+\dfrac{q^4}{R(q)^4R(q^3)^2}-3q\left(\dfrac{R(q^3)^2}{R(q)}-q^2\dfrac{R(q)}{R(q^3)^2}\right)=\dfrac{E_3E_5^5}{E_1E_{15}^5}+7q^2-9q^4\dfrac{E_1E_{15}^5}{E_3E_{5}^5}.
\end{align*}
Employing the above identity, \eqref{C_2}, and \eqref{C_4} in \eqref{3coregeneve1}, we arrive at \eqref{3coregen}.
\end{proof}
\begin{proof}[Proof of Corollary \ref{coro1}]
The identity \eqref{3coregen} can be written as
\begin{align*}
    \sum_{n=0}^{\infty}a_{3}(5n+3)q^{n}&=\sum_{n=0}^{\infty}a_{3}(n)q^{5n+1}.
\end{align*}
Equating the coefficients of $q^{5n+r}$  for $r=0,2,3$ and $4$ from both sides of the above, we arrive at \eqref{3coreR1}. On the other hand, equating the coefficients of $q^{5n+1}$, we have
\begin{align*}
       a_3(25n+8)=a_3(n)
\end{align*}
from which \eqref{3coreR2} follows by induction.
\end{proof}

\begin{proof}[Proof of  Theorem \ref{Theo2}]
First, we prove \eqref{colorgen1}. We have
\begin{align*}
\sum_{n=0}^{\infty}p_{[1^13^1]}(n)q^n=\dfrac{1}{E_1E_3}.
\end{align*}
Employing \eqref{1byE_1} in the above and  then applying $[q^{5n+1}]$, we find that
\begin{align}
\label{colgen1eve1}&\sum_{n=0}^{\infty}p_{[1^13^1]}(5n+1)q^n\notag\\
&=\dfrac{E_5^5E_{15}^5}{E_1^6E_3^6}\bigg(25q^3+6q^2\bigg(R(q)^2R(q^3)+q^2\dfrac{R(q)}{R(q^3)^2}-\dfrac{R(q^3)^2}{R(q)}-\dfrac{q^2}{R(q)^2R(q^3)}\bigg)\notag\\
&\quad+2q\bigg(R(q)^4R(q^3)^2+q^4\dfrac{R(q)^2}{R(q^3)^4}+\dfrac{R(q^3)^4}{R(q)^2}+\dfrac{q^4}{R(q)^4R(q^3)^2}\bigg)\notag\\
&\quad-3q\bigg(q^2\dfrac{R(q)^3}{R(q^3)}-R(q)R(q^3)^3-\dfrac{q^4}{R(q)R(q^3)^3}+q^2\dfrac{R(q^3)}{R(q)^3}\bigg)\notag\\
&\quad+\bigg(R(q)^3R(q^3)^4-\dfrac{q^6}{R(q)^3R(q^3)^4}+\dfrac{R(q^3)^3}{R(q)^4}-q^4\dfrac{R(q)^4}{R(q^3)^3}\bigg)\bigg).
\end{align}

Now, with the aid of Lemma \ref{LemmaA} and Lemma \ref{LemmaC}, we have
\begin{align}
    \label{C_5}C_5&:=R(q)^3R(q^3)^4-\dfrac{q^6}{R(q)^3R(q^3)^4}=A(q^3)+(C_2-q^2)C_3\nonumber\\
    &=\dfrac{E_3^6}{E_{15}^6}+3q\dfrac{ E_5^5 E_3}{E_1 E_{15}^5}+2 q^3
    \intertext{\textup{and}}
   \label{C_6} C_6&:=R(q)^4R(q^3)^2+q^4\dfrac{R(q)^2}{R(q^3)^4}+\dfrac{R(q^3)^4}{R(q)^2}+\dfrac{q^4}{R(q)^4R(q^3)^2}=C_1C_2\notag\\
    &=2\dfrac{ E_3 E_5^5}{E_1 E_{15}^5}+5 q^2-18q^4\dfrac{ E_1 E_{15}^5 }{E_3 E_5^5}.
\end{align}
Using  Lemma \ref{LemmaC}, \eqref{C_4}, \eqref{C_5}, and  \eqref{C_6} in \eqref{colgen1eve1}, we arrive at \eqref{colorgen1}.

Next, we prove \eqref{colorgen2}. We have
\begin{align*}
\sum_{n=0}^{\infty}p_{[1^23^2]}(n)q^n=\dfrac{1}{E_1^2E_3^2}.
\end{align*}
Employing \eqref{1byE_1} in the above  and  then applying $[q^{5n+2}]$, we find that
\begin{align} \label{p1232}
&\sum_{n=0}^{\infty}p_{[1^23^2]}(5n+2)q^n\notag\\
&=5\dfrac{E_5^{10}E_{15}^{10}}{E_1^{12}E_3^{12}}\bigg(\bigg( R(q)^6R(q^3)^8+q^4\dfrac{R(q^3)^6}{R(q)^8}+q^8\dfrac{R(q)^8}{R(q)^6}+\dfrac{q^{12}}{R(q)^6R(q^3)^8}\bigg)\notag\\
&\quad+2q\bigg(R(q)^7R(q^3)^6+q^2\dfrac{R(q^3)^7}{R(q)^6}-q^8\dfrac{R(q)^6}{R(q^3)^7}-\dfrac{q^{10}}{R(q)^7R(q^3)^6}\bigg)\notag\\
&\quad+4q\bigg(R(q)R(q^3)^8-q^4\dfrac{R(q)^8}{R(q^3)}+q^6\dfrac{R(q^3)}{R(q)^8}-\dfrac{q^{10}}{R(q)R(q^3)^8}\bigg)\notag\\
&\quad+8q\bigg(R(q)^4R(q^3)^7-q^4\dfrac{R(q^3)^4}{R(q)^7}+q^6\dfrac{R(q)^7}{R(q^3)^4}-\dfrac{q^{10}}{R(q)^4R(q^3)^7}\bigg)\notag\\
&\quad+4q^2\bigg(R(q)^8R(q^3)^4+\dfrac{R(q^3)^8}{R(q)^4}+q^8\dfrac{R(q)^4}{R(q^3)^8}+\dfrac{q^{8}}{R(q)^8R(q^3)^4}\bigg)\notag\\
&\quad-8q^2\bigg(\dfrac{R(q^3)^7}{R(q)}-q^2R(q)^7R(q^3)-\dfrac{q^6}{R(q)^7R(q^3)}+q^8\dfrac{R(q)}{R(q^3)^7}\bigg)\notag\\
&\quad+20q^2\bigg(R(q)^5R(q^3)^5-q^2\dfrac{R(q^3)^5}{R(q)^5}-q^6\dfrac{R(q)^5}{R(q^3)^5}+\dfrac{q^{8}}{R(q)^5R(q^3)^5}\bigg)\notag\\
&\quad+27q^2\bigg(R(q)^2R(q^3)^6+q^4\dfrac{R(q)^6}{R(q^3)^2}+q^4\dfrac{R(q^3)^2}{R(q)^6}+ \dfrac{q^8}{R(q)^2R(q^3)^6}\bigg)\notag\\
&\quad+16q^3\bigg(R(q)^6R(q^3)^3-\dfrac{R(q^3)^6}{R(q)^3}+q^6\dfrac{R(q)^3}{R(q^3)^6}-\dfrac{q^{6}}{R(q)^6R(q^3)^3}\bigg)\notag\\
&\quad+30q^3\bigg(R(q^3)^5-\dfrac{q^6}{R(q^3)^5}+q^2R(q)^5-\dfrac{q^{4}}{R(q)^5}\bigg)\notag\\
&\quad+64q^3\bigg(R(q)^3R(q^3)^4+q^2\dfrac{R(q^3)^3}{R(q)^4}-q^4\dfrac{R(q)^4}{R(q^3)^3}-\dfrac{q^{6}}{R(q)^3R(q^3)^4}\bigg)\notag\\
&\quad+64q^4\bigg(R(q)R(q^3)^3-q^2\dfrac{R(q)^3}{R(q^3)}-q^2\dfrac{R(q^3)}{R(q)^3}+\dfrac{q^{4}}{R(q)R(q^3)^3}\bigg)\notag\\
&\quad+108q^4\bigg(R(q)^4R(q^3)^2+q^4\dfrac{R(q)^2}{R(q^3)^4}+\dfrac{R(q^3)^4}{R(q)^2}+\dfrac{q^4}{R(q)^4R(q^3)^2}\bigg)\notag\\
&\quad+108q^5\bigg(   R(q)^2 R(q^3)- \dfrac{R(q^3)^2}{R(q)}+q^2\dfrac{R(q)}{R(q^3)^2}-\dfrac{q^2}{R(q)^2 R(q^3)}\bigg)+45q^6\bigg).
\end{align}

By Lemma \ref{LemmaA} and Lemma \ref{LemmaC}, we find that
\begin{align}
    \label{C_7}C_7&:=\dfrac{R(q^3)^7}{R(q)}-q^2R(q)^7R(q^3)-\dfrac{q^6}{R(q)^7R(q^3)}+q^8\dfrac{R(q)}{R(q^3)^7}=C_4C_5,\\
    \label{C_8}C_8&:=R(q)R(q^3)^8-q^4\dfrac{R(q)^8}{R(q^3)}+q^6\dfrac{R(q^3)}{R(q)^8}-\dfrac{q^{10}}{R(q)R(q^3)^8}=A(q^3)C_2+q^4(C_3-AC_1),\\
    \label{C_9}C_9&:=R(q)^4R(q^3)^7-q^4\dfrac{R(q^3)^4}{R(q)^7}+q^6\dfrac{R(q)^7}{R(q^3)^4}-\dfrac{q^{10}}{R(q)^4R(q^3)^7}=C_2C_5-q^4(C_1C_4+C_3)\\ \intertext{and}
     \label{C_{10}}C_{10}&:=R(q)^7R(q^3)^6+q^2\dfrac{R(q^3)^7}{R(q)^6}-q^8\dfrac{R(q)^6}{R(q^3)^7}-\dfrac{q^{10}}{R(q)^7R(q^3)^6}\nonumber\\
     &=C_6(C_5-C_8+q^2C_4)-q^2A(q^3)+q^4(C_3-A).
\end{align}
Note that each of $C_7$--$C_{10}$ can be expressed in terms of $E_n$'s. Using \eqref{A}, Lemma \ref{LemmaC}, \eqref{C_4}, \eqref{C_5}, \eqref{C_6}, and \eqref{C_7}--\eqref{C_{10}} in \eqref{p1232}, we  arrive at \eqref{colorgen2}.

Finally, we sketch the proof of \eqref{colorgen3}. We have
\begin{align*}
\sum_{n=0}^{\infty}p_{[1^33^3]}(n)q^n=\dfrac{1}{E_1^3E_3^3}.
\end{align*}
As in the previous cases, we employ \eqref{1byE_1} in the above and  then apply $[q^{5n+3}]$ to deduce an identity similar to \eqref{p1232}.  The resulting identity can be shown to be equivalent to \eqref{colorgen3} with the help of Lemma \ref{LemmaC}, \eqref{A}, \eqref{C_1}--\eqref{C_3}, \eqref{C_4}, and \eqref{C_5}.
\end{proof}

\begin{proof}[Proof of Corollary \ref{coro2}] By the binomial theorem, we note that, for any  positive integer $k$,
\begin{align}
  \label{binomial-5}  E_k^5\equiv E_{5k} \;\textup{(mod\;5)}.
\end{align}
Therefore, from \eqref{colorgen1}, we have
\begin{align*}
\sum_{n=0}^{\infty} p_{[1^13^1]}(5n+1)q^n&\equiv \dfrac{E_5^4}{E_1E_{15}}+q^2\dfrac{E_{15}^4}{E_3E_5}~(\textup{mod}~5).
 \end{align*}
Employing \eqref{1byE_1} in the above and then applying $[q^{5n+4}]$, we find that
\begin{align*}
p_{[1^13^1]}(25n+21)\equiv0~(\textup{mod}~5),
\end{align*}
which is \eqref{colgen1R1}.

Now, we prove \eqref{colgen1R2}. Again from \eqref{colorgen1}, we have
\begin{align}\label{100}
  \notag\sum_{n=0}^{\infty}p_{[1^13^1]}(5n+1)q^n&\equiv\dfrac{E_5^5}{E_1^6E_{15}}+10q\dfrac{E_5^{9}}{E_1^2E_{15}}
  +q^2\dfrac{E_{15}^5}{E_3^6E_5}+45q^3\dfrac{E_5^4E_{15}^4}{E_1E_3}\\
  &\quad-90q^5\dfrac{E_{15}^{9}}{E_3^2E_5}~(\textup{mod}~25).
\end{align}
Next, we employ \eqref{1byE_1} followed by the extraction operator $\left[q^{5n+4}\right]$ and \eqref{binomial-5} to each term of the right side of the above to find the following identities.
\begin{align}\label{Doneinseparate1}
\notag\left[q^{5n+4}\right]\left\{\dfrac{E_5^5}{E_1^6E_{15}}\right\}&=\dfrac{5}{E_3}\bigg(63\dfrac{E_5^6}{E_1^7}
+52\times5^3q\dfrac{E_5^{12}}{E_1^{13}}+63\times5^5q^2\dfrac{E_5^{18}}{E_1^{19}}\\
\notag&\quad+6\times5^{8}q^3\dfrac{E_5^{24}}{E_1^{25}}+5^{10}q^4\dfrac{E_5^{30}}{E_1^{31}}\bigg)\\
&\equiv5\times63\dfrac{E_1^3E_5^4}{E_3}~(\textup{mod}~25),
\end{align}

\begin{align}\label{Doneinseparate2}
\notag\left[q^{5n+4}\right]\left\{10q\dfrac{E_5^9}{E_1^2E_{15}}\right\}&=\left[q^{5n+3}\right]\left\{10\dfrac{E_5^9}{E_1^2E_{15}}\right\}\\
\notag&=10\dfrac{E_1^9}{E_3}\cdot\dfrac{E_5^{10}}{E_1^{12}}\left(15q+10\left(R(q)^5-\dfrac{q^2}{R(q)^5}\right)\right)\\
&\equiv0~(\textup{mod}~25),
\end{align}

\begin{align}\label{Doneinseparate3}
\notag\left[q^{5n+4}\right]\left\{q^2\dfrac{E_{15}^5}{E_3^6E_5}\right\}&=\dfrac{5}{E_1}\bigg(63\dfrac{E_{15}^6}{E_3^7}
+52\times5^3q\dfrac{E_{15}^{12}}{E_3^{13}}+63\times5^5q^2\dfrac{E_{15}^{18}}{E_3^{19}}\\
\notag&\quad+6\times5^{8}q^3\dfrac{E_{15}^{24}}{E_3^{25}}+5^{10}q^4\dfrac{E_{15}^{30}}{E_3^{31}}\bigg)\\
\notag&\equiv5\times63q^2\dfrac{E_{15}^6}{E_1E_3^7}\\
&\equiv5\times63q^2\dfrac{E_3^3E_{15}^4}{E_1}~(\textup{mod}~25),
\end{align}
\begin{align}\label{Doneinseparate4}
\notag\left[q^{5n+4}\right]\left\{45q^3\dfrac{E_5^4E_{15}^4}{E_1E_3}\right\}&=\left[q^{5n+1}\right]\left\{45\dfrac{E_5^4E_{15}^4}{E_1E_3}\right\}\\
\notag&\equiv45E_1^4E_3^4\left(\dfrac{E_5^4}{E_1E_{15}}+q^2\dfrac{E_{15}^4}{E_3E_5}\right)\\
\notag&\equiv45\left(\dfrac{E_1^3E_5^4E_3^4}{E_{15}}+q^2\dfrac{E_1^4E_3^3E_{15}^4}{E_5}\right)\\
&\equiv45\left(\dfrac{E_1^3E_{5}^4}{E_3}+q^2\dfrac{E_3^3E_{15}^4}{E_1}\right)~(\textup{mod}~25)
\end{align}
and
\begin{align}\label{Doneinseparate5}
\notag\left[q^{5n+4}\right]\left\{90q^5\dfrac{E_{15}^9}{E_3^2E_{5}}\right\}&=90\dfrac{E_{25}^{10}}{E_1E_3^3}\left(15q^5+10q^2
\left(R(q^3)^5-\dfrac{q^6}{R(q^3)^5}\right)\right)\\
&\equiv0~(\textup{mod}~25).
\end{align}
Using \eqref{Doneinseparate1}--\eqref{Doneinseparate5} in \eqref{100}, we arrive at
\begin{align*}
\sum_{n=0}^{\infty}p_{[1^13^1]}(25n+21)q^n&\equiv10\left(\dfrac{E_1^3E_{5}^4}{E_3}+q^2\dfrac{E_3^3E_{15}^4}{E_1}\right)~(\textup{mod}~25).
\end{align*}
Employing Lemma \ref{Dissect} in the above and then applying $\left[q^{5n}\right]$, we obtain
\begin{align*}
&\sum_{n=0}^{\infty}p_{[1^13^1]}(125n+21)q^n\notag\\
&\equiv10\bigg(\dfrac{E_1^4E_5^3E_{15}^5}{E_3^6}\bigg(R(q)^3R(q^3)^4-\dfrac{q^6}{R(q)^3R(q^3)^4}+q^3\bigg(25
-3\bigg(\dfrac{R(q)^3}{R(q^3)}+\dfrac{R(q^3)}{R(q)^3}\bigg)\bigg)\bigg)\notag\\
&\quad -q\dfrac{E_3^4E_5^5E_{15}^3}{E_1^6}\bigg(3\bigg(R(q)^4R(q^3)^2+q^4\dfrac{R(q)^2}{R(q^3)^4}+\dfrac{R(q^3)^4}{R(q)^2}+\dfrac{q^4}{R(q)^4R(q^3)^2}\bigg)\notag\\
&\quad -3\bigg(R(q) R(q^3)^3+\dfrac{q^4}{R(q) R(q^3)^3}\bigg)+q\bigg(\dfrac{R(q^3)^3}{R(q)^4}-q^2\dfrac{R(q)^4}{R(q^3)^3}\bigg)\notag\\
&\quad+3q\bigg(R(q)^2 R(q^3)- \dfrac{R(q^3)^2}{R(q)}+q^2\dfrac{R(q)}{R(q^3)^2}-\dfrac{q^2}{R(q)^2 R(q^3)}\bigg)\bigg)\bigg)\;(\textup{mod}\; 25).
\end{align*}
Applying Lemma \ref{LemmaC}, \eqref{C_4}, \eqref{C_5}, and \eqref{C_6} in the above, we find that
\begin{align*}
    \sum_{n=0}^{\infty}p_{[1^13^1]}(125n+21)q^n&\equiv10\left(\dfrac{E_{5}^4}{E_1E_{15}}+q^2\dfrac{E_{15}^4}{E_3E_5}\right)~(\textup{mod}~25).
\end{align*}
Once again employing \eqref{1byE_1} in the above and then applying  $\left[q^{5n+4}\right]$, we arrive at \eqref{colgen1R2}.
\end{proof}

\begin{proof}[Proof of Corollary \ref{coro3}]
By \eqref{colorgen2}, it is obvious that \eqref{Colgen2R1} is true. It remains to prove \eqref{Colgen2R2}.
Using \eqref{binomial-5} in \eqref{colorgen2}, we find that
\begin{align}
  \label{start}\sum_{n=0}^{\infty}p_{[1^23^2]}(5n+2)q^n&\equiv5\dfrac{E_5^8}{E_1^2E_{15}^2}+10q^2\dfrac{E_5^{3}E_{15}^3}{E_1E_{3}}
  +5q^4\dfrac{E_{15}^8}{E_3^2E_5^2}~(\textup{mod}~25).
\end{align}

From \eqref{Doneinseparate2} and \eqref{Doneinseparate5}, we have
\begin{align*}
   [q^{5n+3}]\bigg\{\dfrac{1}{E_1^2}\bigg\}&\equiv0\:\textup{(mod\:5)}\\\intertext{and}
  [q^{5n+4}]\bigg\{\dfrac{1}{E_3^2}\bigg\}&\equiv0\:\textup{(mod\:5)}.
\end{align*}

Employing \eqref{1byE_1} in \eqref{start}, applying $[q^{5n+3}]$ and then with an aid from the above congruences, we obtain
\begin{align*}
&\sum_{n=0}^{\infty}p_{[1^23^2]}(25n+17)q^n&\notag\\&\equiv 10\dfrac{E_5^{5}E_{15}^5}{E_1^3E_3^3}\bigg(\bigg(R(q)^3R(q^3)^4+q^2\dfrac{R(q^3)^3}{R(q)^4}-q^4\dfrac{R(q)^4}{R(q^3)^3}-\dfrac{q^{6}}{R(q)^3R(q^3)^4}\bigg) \notag\\
&\quad+2q\bigg(R(q)^4R(q^3)^2+q^4\dfrac{R(q)^2}{R(q^3)^4}+\dfrac{R(q^3)^4}{R(q)^2}+\dfrac{q^4}{R(q)^4R(q^3)^2}\bigg)\notag\\
&\quad+3q\bigg(R(q)R(q^3)^3-q^2\dfrac{R(q)^3}{R(q^3)}-q^2\dfrac{R(q^3)}{R(q)^3}+\dfrac{q^{4}}{R(q)R(q^3)^3}\bigg)\notag\\
&\quad+6q^2\bigg(R(q)^2 R(q^3)- \dfrac{R(q^3)^2}{R(q)}+q^2\dfrac{R(q)}{R(q^3)^2}-\dfrac{q^2}{R(q)^2 R(q^3)}\bigg)\bigg)\textup{(mod\:25)},
\end{align*} which by Lemma \ref{LemmaC}, \eqref{C_4}, \eqref{C_5} , and  \eqref{C_6} reduces to
\begin{align}
   \label{Reach} \sum_{n=0}^{\infty}p_{[1^23^2]}(25n+17)q^n&\equiv10\bigg(E_1^2E_3^3\dfrac{E_5^4}{E_{15}}+q^2E_1^3E_3^2\dfrac{E_{15}^4}{E_{5}}\bigg)\;\textup{(mod\:25)}.
\end{align}
We again employ \eqref{E_1} in the above and then apply $[q^{5n+1}]$ and Lemma \ref{LemmaC}, to obtain
\begin{align*}
    \sum_{n=0}^{\infty}p_{[1^23^2]}(125n+42)q^n&\equiv20\bigg(4\dfrac{E_5^8}{E_1^2E_{15}^2}+3q^2\dfrac{E_5^3E_{15}^3}{E_1E_3}
    +4q^4\dfrac{E_{15}^8}{E_3^2E_5^2}\bigg)\;\textup{(mod\:25)}.
\end{align*}
The procedure from \eqref{start} to \eqref{Reach} can be repeated in the above, to arrive at
\begin{align}
    \label{Final}\sum_{n=0}^{\infty}p_{[1^23^2]}(625n+417)q^n&\equiv10\bigg(E_1^2E_3^3\dfrac{E_5^4}{E_{15}}+q^2E_1^3E_3^2\dfrac{E_{15}^4}{E_{5}}\bigg)\;\textup{(mod\:25)}.
\end{align}
From \eqref{Reach} and \eqref{Final}, it follows that
\begin{align*}
   p_{[1^23^2]}(625n+417)\equiv p_{[1^23^2]}(25n+17)\;\textup{(mod\:25)},
\end{align*}
 which by induction gives \eqref{Colgen2R2}.
\end{proof}

\begin{proof}[Proof of \eqref{Colgen3R1}]

Using \eqref{E_1}, \eqref{C_3}, and \eqref{binomial-5}, we can easily see that
\begin{align*}
    \sum_{n=0}^{\infty}p_{[1^33^3]}(n)q^n&=\dfrac{1}{E_1^3E_3^3}\equiv\dfrac{E_1^2E_3^2}{E_5E_{15}}\;\;\textup{(mod\:\:5)}\\\intertext{and}
    [q^{5n+3}]\{E_1^2E_3^2\}&\equiv0\;\;\textup{(mod\:\:5)}.
\end{align*}
Congruence \eqref{Colgen3R1} follows immediately from the above.
\end{proof}

\begin{proof}[Proof of Theorem \ref{TheoCongruence}]
We recall from Section \ref{intro} that
\begin{align*}
  \sum_{n=0}^\infty p_{\beta}(3n+1)q^n&=\dfrac{E_3^3}{E_1^2}.
  \end{align*}
Employing Lemma \ref{Dissect} in the above and then applying $[q^{5n+2}]$, we find that
\begin{align}\label{beta-eq}
 & \sum_{n=0}^{\infty}p_{\beta}(15n+7)q^n\notag\\
 &=5\dfrac{E_5^{10}E_{15}^3}{E_1^{12}}\bigg(4qC_2+10q^2A+4q^2C_4-3q^3C_1^2+21q^3+R(q)^6R(q^3)^3 -\dfrac{q^6}{R(q)^6R(q^3)^3}\notag\\
 &\quad-12q\bigg(R(q)^4R(q^3)^2
  +\dfrac{q^4}{R(q)^4R(q^3)^2}\bigg)\bigg)
  +12q^2\bigg(\dfrac{R(q^3)^2}{R(q)}-q^2\dfrac{R(q)}{R(q^3)^2}\bigg)\bigg).
\end{align}
Congruence \eqref{15n7} follows immediately from the above.

Now, we prove the remaining two congruences. From Section \ref{intro}, we also recall that
\begin{align*}
  \sum_{n=0}^\infty p_{\beta}(9n+5)q^n&=3\dfrac{E_3^6}{E_1^5}.
  \end{align*}
Employing \eqref{binomial-5} in the above, we have
\begin{align*}
  \sum_{n=0}^\infty p_{\beta}(9n+5)q^n&\equiv3\dfrac{E_{15}E_3}{E_5}~\textup{(mod~5)},
  \end{align*}
  which, by \eqref{E_1}, can be rewritten as
  \begin{align*}
  \sum_{n=0}^\infty p_{\beta}(9n+5)q^n&\equiv3\dfrac{E_{15}E_{75}}{E_5}\left(R(q^{15})-q^3+\dfrac{q^6}{R(q^{15})}\right)~\textup{(mod~5)}.
  \end{align*}
Applying $[q^{5n+2}]$ and $[q^{5n+4}]$ in the above, we obtain \eqref{45n23} and \eqref{45n41}, respectively.
\end{proof}

\begin{remark}
 We could not effectively transform $\big(R(q^3)^2/R(q)-q^2R(q)/R(q^3)^2\big)$ in \eqref{beta-eq} into an expression involving only  $E_n$'s, which could have helped in reducing the right side of \eqref{beta-eq} in terms of $E_n$'s with an aid from \eqref{C_3}. The transformed equivalent form of \eqref{beta-eq}  might have lead to congruences modulo higher  powers of 5, including the following congruences
conjectured by Zhang and Shi \cite[Conjecture 6]{zhang}.
\begin{align*}
p_{\beta}(3\cdot 5^2n+22)&\equiv p_{\beta}(3\cdot 5^2n+52)\equiv p_{\beta}(3\cdot 5^2n+67)\equiv0~(\textup{mod}~5^2),\\
p_{\beta}(3\cdot 5^4n+547)&\equiv p_{\beta}(3\cdot 5^4n+1297)\equiv p_{\beta}(3\cdot 5^4n+1672)\equiv0~(\textup{mod}~5^3).
\end{align*}
\end{remark}

\section{\textbf{Proofs of Theorems \ref{Theo3}--\ref{Theo4}  using identities for $R(q)$ and $R(q^4)$}}\label{SectionR1R4}
\begin{proof}[Proof of Theorem \ref{Theo3}]
We have
\begin{align*}
\sum_{n=0}^\infty b_4(n)q^n=\dfrac{E_4}{E_1}.
\end{align*}
Employing Lemma \ref{Dissect}  in the above and then applying $[q^{5n+3}]$, we find that
\begin{align}
\label{b}\sum_{n=0}^\infty b_4(5n+3)q^n&=\dfrac{E_5^5E_{20}}{E_1^6}\bigg(3\bigg(R(q)R(q^4)+\dfrac{q^2}{R(q)R(q^4)}\bigg)\notag\\
&\quad-q\bigg(5+\bigg(\dfrac{R(q)^4}{R(q^4)}-\dfrac{R(q^4)}{R(q)^4}\bigg)\bigg)\bigg).
\end{align}

Now, from Lemma \ref{LemmaA} and Lemma \ref{LemmaF}, it follows that
\begin{align}
\label{D_2}D_2&:=\dfrac{R(q)^4}{R(q^4)}-\dfrac{R(q^4)}{R(q)^4}=BF+B(q^2)\notag\\
&=8q\dfrac{ E_1 E_{10}^5}{E_2 E_5^5}+4q^2\dfrac{E_2 E_{20}^5 }{E_4 E_{10}^5}+16q^3\dfrac{E_1 E_{20}^5 }{E_4 E_5^5}.
\end{align}
Using \eqref{D_1} and \eqref{D_2} in \eqref{b}, we find that
\begin{align}
\label{b4gen}&\sum_{n=0}^\infty b_4(5n+3)q^n\notag \\
&=3\dfrac{E_4E_{10}^{10}}{E_1^5E_2^2E_{20}^4}+q\dfrac{E_5^5E_{20}}{E_1^6}-
8q^2\dfrac{E_{10}^5E_{20}}{E_1^5E_2}-4q^3\dfrac{E_2E_5^5E_{20}^6}{E_1^6
E_4E_{10}^5}-16q^4\dfrac{E_{20}^6}{E_1^5E_4} \notag \\
&=\left(\dfrac{E_{10}^5}{E_2^4E_{20}^3}-4q^2\dfrac{E_{20}^2}{E_2^3E_4}\right)
\left(3\dfrac{E_2^2E_4E_{10}^5}{E_1^5E_{20}}+q\dfrac{E_2^4E_5^5E_{20}^4}{E_1^6E_{10}^5}
+4q^2\dfrac{E_2^3E_{20}^4}{E_1^5}\right).
\end{align}

From \cite[Lemma 2.2.2]{bb-distinct}, we recall that
\begin{align*}
   \dfrac{E_5^5}{E_1^4E_{10}^3}&=\dfrac{E_5}{E_2^2E_{10}}+4q\dfrac{E_{10}^2}{E_1^3E_{2}}.
\end{align*}
Replacing $q$ by $q^2$ in the above and then using the resulting identity in \eqref{b4gen}, we arrive at \eqref{ell1}.
\end{proof}
\begin{proof}[Proof of Theorem \ref{Theo4}]
We have
\begin{align*}
\sum_{n=0}^{\infty}a_{4}(n)q^{n}=\dfrac{E_{4}^{4}}{E_1}.
\end{align*}
Employing  Lemma \ref{Dissect}  in the above and then applying $[q^{5n}]$, we find that
\begin{align}
\label{c}&\sum_{n=0}^{\infty}a_{4}(5n)q^{n}\notag\\
\notag&=\dfrac{E_5^5E_{20}^4}{E_1^6}\bigg(R(q)^4R(q^4)^4+\dfrac{q^8}{R(q)^4R(q^4)^4}+q\bigg(-4\bigg(R(q)^3R(q^4)^3+\dfrac{q^6}{R(q)^3R(q^4)^3}\bigg)\\
\notag&\quad-3\bigg(\dfrac{R(q^4)^4}{R(q)}-q^6\dfrac{R(q)}{R(q^4)^4}\bigg)\bigg)+q^2\bigg(4\bigg(R(q)^2R(q^4)^2+\dfrac{q^4}{R(q)^2R(q^4)^2}\bigg)\\
\notag&\quad-8\bigg(\dfrac{R(q^4)^3}{R(q)^2}-\dfrac{R(q)^2}{R(q^4)^3}\bigg)\bigg)+q^3\bigg(24\bigg(R(q)R(q^4)+\dfrac{q^2}{R(q)R(q^4)}\bigg)\\
&\quad-2\bigg(\dfrac{R(q^4)^2}{R(q)^3}-q^2\dfrac{R(q)^3}{R(q^4)^2}\bigg)\bigg)+q^4\bigg(-25-8\bigg(\dfrac{R(q)^4}{R(q^4)}-\dfrac{R(q^4)}{R(q)^4}\bigg)\bigg)\bigg).
\end{align}

It follows from Lemma \ref{LemmaA} and Lemma \ref{LemmaD} that
\begin{align}
 \label{D_3}&D_3:=\dfrac{R(q^4)^2}{R(q)^3}-q^2\dfrac{R(q)^3}{R(q^4)^2}=A-D_1D_2,\\
 \label{D_4}&D_4:=\dfrac{R(q^4)^3}{R(q)^2}-q^4\dfrac{R(q)^2}{R(q^4)^3}=D_1D_3+q^2D_2,\\
 \label{D_5}&D_5:=\dfrac{R(q^4)^4}{R(q)}-q^6\dfrac{R(q)}{R(q^4)^4}=D_1D_4-q^2D_3,\\
 \label{D_6}&D_6:=R(q)^2R(q^4)^2+\dfrac{q^4}{R(q)^2R(q^4)^2}=D_1^2-2q^2,\\
 \label{D_7}&D_7:=R(q)^3R(q^4)^3+\dfrac{q^6}{R(q)^3R(q^4)^3}=D_1^3-3q^2D_1\\\intertext{and}
 \label{D_8}&D_8:=R(q)^4R(q^4)^4+\dfrac{q^8}{R(q)^4R(q^4)^4}=D_6^2-2q^4.
\end{align}
Using \eqref{D_1}, \eqref{D_2}, and \eqref{D_3}--\eqref{D_8} in \eqref{c}, we
arrive at \eqref{4coregen}.
\end{proof}

\section*{\textbf{Acknowledgement}}
 The first author was partially supported by Grant no. MTR/2018/000157 of Science \& Engineering Research Board (SERB), DST, Government of India under the MATRICS scheme. The third author was partially supported by Council of Scientific \& Industrial Research (CSIR), Government of India under CSIR-JRF scheme. The authors thank both the funding agencies.

\end{document}